\newtheorem{theorem}{Theorem}[section]
\newtheorem{corollary}[theorem]{Corollary}
\newtheorem{lemma}[theorem]{Lemma}
\newtheorem{proposition}[theorem]{Proposition}
\numberwithin{equation}{section}
\begin{document}
\title{Global bounded classical solutions to a parabolic-elliptic chemotaxis model with local sensing and asymptotically unbounded motility} 
\author{Jie Jiang}
\address{Innovation Academy for Precision Measurement Science and Technology, Chinese Academy of Sciences, Wuhan 430071, HuBei Province, P.R. China}
\email{jiang@apm.ac.cn, jiang@wipm.ac.cn}

\author{Philippe Lauren\c{c}ot}
\address{Laboratoire de Math\'ematiques (LAMA), UMR~5127, Universit\'e Savoie Mont Blanc, CNRS \\ F--73000 Chamb\'ery, France}
\email{philippe.laurencot@univ-smb.fr}

\keywords{chemotaxis - global existence - boundedness - comparison}
\subjclass{35K51 - 35K55 - 35A01}

\date{\today}

\begin{abstract}
Global existence and boundedness of classical solutions are shown for a parabolic-elliptic chemotaxis system with local sensing when the motility function is assumed to be unbounded at infinity. The cornerstone of the proof is the derivation of $L^\infty$-estimates on the second component of the system and is achieved by various comparison arguments. 
\end{abstract}

\maketitle

%
%
\pagestyle{myheadings}
\markboth{\sc{J.~Jiang \& Ph.~Lauren\c cot}}{\sc{Bounded solutions to a chemotaxis model with local sensing and unbounded motility}}

\section{Introduction}\label{sec1}

Let $\Omega$ be a smooth bounded domain of $\mathbb{R}^N$, $N\ge 1$, and consider the initial-boundary value problem
\begin{subequations}\label{ks}
	\begin{align}
		& \partial_t u = \Delta\big( u \gamma(v)\big), \qquad &(t,x) &\in (0,\infty)\times \Omega, \label{ks1}\\
		& 0 = \Delta v -v + u ,  \qquad &(t,x) &\in (0,\infty)\times \Omega, \label{ks2}\\
		& \nabla\big( u\gamma(v)\big)\cdot \mathbf{n} = \nabla v\cdot \mathbf{n} = 0, \qquad &(t,x) &\in (0,\infty)\times \partial\Omega, \label{ks3}\\
		& u(0) = u^{in}, \qquad &x &\in\Omega, \label{ks4}
	\end{align}
\end{subequations}
where $\mathbf{n}$ denotes the outward unit normal vector field to $\partial\Omega$.  

The system~\eqref{ks} is a classical parabolic-elliptic simplification of the well-known fully parabolic system of partial differential equations named after Keller \& Segel, which is originally proposed in their seminal work~\cite{1971KS} to model the chemotaxis phenomenon due to a local sensing mechanism:
\begin{subequations}\label{oks}
	\begin{align}
	& \partial_t u = \Delta (u \gamma(v)),  &(t,x)&\in (0,\infty)\times \Omega, \label{oks1}  \\
	&	\tau \partial_t v =\Delta v -  v  +  u,  &(t,x)&\in (0,\infty)\times \Omega, \label{oks2}\\
	& \nabla\big( u\gamma(v)\big)\cdot \mathbf{n} = \nabla v\cdot \mathbf{n} = 0, &(t,x) &\in (0,\infty)\times \partial\Omega, \label{oks3}\\
	& (u,\tau v)(0) = (u^{in},\tau v^{in}), & x &\in\Omega, \label{oks4}
	\end{align}
\end{subequations} 
Here, $u$ and $v$ represent the cell density and the chemical concentration, respectively.  The cellular motility $\gamma$ is a positive function on $(0,\infty)$ and its dependence on $v$ accounts for the influence of the chemical signal on the motion of cells  which is better seen after expanding
\begin{equation*}
	\Delta (u \gamma(v)) = \mathrm{div}\big( \gamma(v) \nabla u + u \gamma'(v) \nabla v):
\end{equation*}
the diffusion of cells in space is governed by the values of $\gamma(v)$, while the (chemotactic) bias on their motion induced by the signal is attractive when $\gamma'(v)<0$ and repulsive when $\gamma'(v)>0$.  More recently, an extended model involving a third component $n$ accounting for the nutrient level has been introduced in \cite{2011Science} and reads:
\begin{subequations}\label{cpn}
	\begin{align}
		& \partial_t u  = \Delta (u \gamma(v)) + \theta u f(n), \qquad  &(t,x)&\in (0,\infty)\times\Omega, \label{cpn1} \\
		&	\tau \partial_t v =\Delta v -  v  +  u, \qquad  &(t,x)&\in (0,\infty)\times\Omega, \label{cpn2} \\
		& \partial_t n= \Delta n - \theta u f(n), \qquad  &(t,x)&\in 
		(t,x)\in (0,\infty)\times\Omega, \label{cpn3} 
	\end{align}
\end{subequations}
also supplemented with no-flux boundary conditions and initial conditions. In~\cite{2011Science}, the motility~$\gamma$ is assumed to be a positive non-increasing function, which reflects a repressive effect of the signal on cellular motility.  Formation of spatially periodic patterns are shown by numerical simulations and experimental analysis in a growing bacteria population merely under this motility control. We note that setting $\theta=0$ in~\eqref{cpn} cancels the coupling between $u$ and $n$ and we then recover the system~\eqref{oks} for $(u,v)$.

The mathematical analysis of the Keller--Segel-type system~\eqref{oks} and its variants (such as~\eqref{ks} or~\eqref{cpn})  has attracted a lot of interest in recent years. Besides the quasilinear structure of the cell's equation~\eqref{oks1} coupling in a nonlinear way the dynamics of the cells and the signal, the cells' equation~\eqref{oks1} features a (signal-dependent) degeneracy when $\gamma$ vanishes, which is likely to occur at infinity ($\gamma(s)=s^{-k}$, $k>0$, or $\gamma(s) = e^{-\chi s}$, $\chi>0$, for instance) or at zero ($\gamma(s) = s^k$, $k>0$, for instance). In that case, equation~\eqref{oks1} is a second-order degenerate quasilinear parabolic equation which, unlike the celebrated porous medium equation $\partial_t z - \Delta z^m=0$, $m>1$, does not have a variational structure in general. Nevertheless, energy methods turn out to be adequate in some special cases which we describe now. When $\gamma\in W^{1,\infty}(0,\infty)$ ranges in a compact subinterval of $(0,\infty)$, thereby excluding the aforementioned possible degeneracy, and $\Omega$ is convex, the existence of a global bounded solution to~\eqref{oks} is shown in space dimension $N=2$ in~\cite{TW2017}, along with the existence of global weak solutions in higher space dimensions $N\ge 3$. When the motility $\gamma$ is the specific algebraically decaying function $\gamma(s)=c_0 s^{-k}$ with $c_0> 0$, it is proved in~\cite{YoonKim17} that global bounded classical solutions whatever the value of $k>0$, provided that $c_0$ is sufficiently small. The smallness condition on $c_0$ is subsequently relaxed in~\cite{AhYo2019, Wang20} when $\tau=0$ and $k<\frac{2}{N-2}$. When $\gamma(s)=1/(c+s^k)$, $c\geq0$, global weak solutions to~\eqref{oks} are constructed in~\cite{DKTY2019} when $N\in \{1,2,3\}$ and $k\in (0,k_N)$ with $k_1=\infty$, $k_2=2$, and $k_3=4/3$. When the motility function $\gamma$ is a negative exponential, $\gamma(s)=e^{-\chi s}$ with $\chi>0$,  global existence of weak solutions is established in~\cite{BLT2020} in arbitrary space dimensions $N\geq1$. Moreover, when $N=2$, there is a threshold value of $\|u^{in}\|_1$, which is exactly the same as that of the minimal Keller--Segel system according to~\cite{JW2020}, separating two different dynamics of~\eqref{oks}: global classical solutions to~\eqref{oks} exist when $\|u^{in}\|_1$ lies beyond the threshold value, while there are initial conditions with $\|u^{in}\|_1$ above the threshold value for which the corresponding classical solution to~\eqref{oks} becomes unbounded in finite or infinite time.

In a series of works \cite{Fuji2023, FuJi2020, FuJi2021a, FuJi2021b, FuSe2022a, FuSe2022b, JLZ2022, JiLa2021, Jian2022, XiaoJiang2022, DLTW2023, LyWa2022, LyWa2023, LiJi2021}, a different approach is developed and proved to be efficient in generic cases. The main idea is to exploit the specific structure of~\eqref{oks1} to derive a quasilinear degenerate parabolic equation involving a non-local term for an auxiliary unknown function. A systematic argument based on comparison techniques, monotonicity tricks, iteration procedures as well as applications of abstract semi-group theory gives rise to advanced theories on well-posedness and qualitative behavior of the solutions. 

Let us summarize the results obtained in the literature for the initial-boundary value problem~\eqref{oks} or its variant~\eqref{cpn} (also supplemented with no-flux boundary conditions) by this alternative method. When $\tau>0$, it is proved in \cite{FuJi2021a, FuSe2022a, JLZ2022} that there exists a unique global classical solution to~\eqref{oks} for a large class of non-negative initial conditions under a rather general assumption on $\gamma$:
\begin{equation}
\gamma\in C^3((0,\infty)), \quad \gamma>0 \;\;\;\text{ on }\;\; (0,\infty), \label{Aad0}
\end{equation}
\begin{equation}\label{dec}
\gamma'\leq 0 \;\;\;\text{ on }\;\; (0,\infty),
\end{equation}
and 
\begin{equation}
\limsup_{s\to\infty} \gamma(s) < 1/\tau. \label{Aad1}
\end{equation}
The monotonicity~\eqref{dec} of $\gamma$ is actually not needed, as shown recently in~\cite{XiaoJiang2022}, where the existence of global classical solutions to~\eqref{oks} is established under the only assumptions~\eqref{Aad0} and~\eqref{Aad1}. In the same vein, global weak solutions are constructed in~\cite{LiJi2021} when $\gamma$ satisfies~\eqref{Aad0} and  $\sup_{s>0}\gamma(s)<1/\tau$, and in~\cite{DLTW2023} when $\gamma$ satisfies~\eqref{Aad0} and decays at most algebraically at infinity. All these existence results share the property that no finite time blowup occurs in~\eqref{oks}, a feature which significantly differs from the minimal Keller--Segel system or the logarithmic Keller--Segel system which both involve a linear cellular diffusion and are well-known to trigger finite time singularities.

Furthermore, the boundedness of global solutions to~\eqref{oks} is shown to be closely related to the decay property of $\gamma$ at infinity. Specifically, for $\gamma$ satisfying~\eqref{Aad0}, \eqref{dec}, and vanishing at infinity 
\begin{equation}
\lim\limits_{s\rightarrow\infty}\gamma(s)=0, \label{van0}
\end{equation} 
it is proved that exponential decay is critical in space dimension $N=2$ in the following sense:  there exist unbounded solutions to~\eqref{oks} when $\gamma(s)=e^{-\chi s}$ for some $\chi>0$ and the $L^1$-norm of $u^{in}$ is sufficiently large~\cite{FuJi2021a, JW2020}, whereas all solutions to~\eqref{oks} are bounded if $\gamma$ decays slower than a negative exponential function; that is, $\gamma$ satisfies
\begin{equation*}
	\liminf\limits_{s\rightarrow\infty}e^{\chi s}\gamma(s)>0
\end{equation*}
for all $\chi>0$, typical examples including $\gamma(s)=s^{-k_1}\log^{-k_2}(1+s)$ with $(k_1,k_2)\in (0,\infty)\times [0,\infty)$, or $\gamma(s)=e^{-s^{\alpha}}$ with $0<\alpha<1$. When $N\geq3$ and $\gamma(s)\sim s^{-k}$, boundedness of global solutions holds true for $k<N/(N-2)$~\cite{JLZ2022,FuSe2022b}, which in particular improves previous results for the specific case $\tau=0$ and $\gamma(s)=s^{-k}$ by substantially enlarging the admissible range of $k$. Recently, the above boundedness results have been improved in \cite{XiaoJiang2022} by removing the monotonicity requirement~\eqref{dec} and  replacing the asymptotically vanishing  property~\eqref{van0} by the asymptotically smallness assumption~\eqref{Aad1}. Moreover, if $\gamma$ simply satisfies \eqref{Aad0} and is bounded by positive constants from above and below, so that~\eqref{oks1} is uniformly parabolic, existence of uniform-in-time bounded solution is also proved in~\cite{XiaoJiang2022} in any bounded domain $\Omega$ in arbitrary space  dimension $N\ge 1$, thus improving the corresponding one in~\cite{TW2017} obtained in two-dimensional convex domains.

As for the initial-boundary value problem~\eqref{ks}, which is recovered from~\eqref{oks} by setting formally $\tau=0$, the existence and boundedness/unboundedness results available for~\eqref{oks} are still true under the same assumptions by setting $1/0=\infty$ in~\eqref{Aad1}. Indeed, global existence of classical solutions is proved in our previous work~\cite{JiLa2021} for motility functions $\gamma$ satisfying~\eqref{Aad0} and such that $\gamma\in L^\infty(s,\infty)$ for any $s>0$. Note that, thanks to the continuity~\eqref{Aad0} of $\gamma$, the latter is obviously equivalent to
\begin{equation}
\limsup_{s\to\infty} \gamma(s) < \infty. \label{Aa1}
\end{equation} 

\medskip

According to the above discussion, the studies of~\eqref{ks} and~\eqref{oks} performed up to now require the motility~$\gamma$ to be a bounded function on $[s,\infty)$ for any $s>0$, as implied by, either \eqref{Aad1}, or the assumption that $\gamma$ ranges in a compact subinterval of $(0,\infty)$. This raises the natural question of the dynamics of~\eqref{ks} and~\eqref{oks} when $\gamma$ becomes unbounded near infinity, and the main purpose of the present contribution is a detailed analysis of the dynamics of~\eqref{ks} in that case.

We first show that the initial-boundary value problem~\eqref{ks} has a unique global bounded classical solution when $\gamma$ is, either non-decreasing, or asymptotically unbounded.

\begin{theorem}\label{thm1}
	Assume  that $\gamma$ satisfies~\eqref{Aad0}, as well as, either $\gamma'\ge 0$ in $(0,\infty)$, or 
\begin{equation}
	\limsup\limits_{s\rightarrow\infty} \gamma(s) = \infty. \label{A0}
\end{equation} 	
	 Suppose that $u^{in}\in W_+^{1,\infty}(\Omega)$ with $m=\|u^{in}\|_1>0$, where
	\begin{equation*}
		W^{1,\infty}_+(\Omega) := \left\{ z \in W^{1,\infty}(\Omega)\ :\ z\ge 0 \;\text{ in }\; \Omega \right\}
	\end{equation*}
	and we use the short notation $\|\cdot\|_p$ for the norm $\|\cdot\|_{L^p(\Omega)}$ with $p\in[1,\infty]$.
	
	Then there is a unique non-negative global classical solution 
	\begin{equation*}
		u\in C\big([0,\infty)\times\bar{\Omega}\big) \cap C^{1,2}\big((0,\infty)\times\bar{\Omega}\big), \qquad v\in C^{1,2}\big((0,\infty)\times\bar{\Omega}\big),
	\end{equation*} 
to~\eqref{ks} which satisfies the conservation of matter
\begin{equation}
	\|u(t)\|_1 = m = \|u^{in}\|_1, \qquad t\ge 0, \label{mc}
\end{equation} 
and is uniformly bounded; that is,
	\begin{equation}
		\sup\limits_{t\in(0,\infty)}\left(\|u(t)\|_\infty+\|v(t)\|_\infty\right)<\infty. \label{ub}
	\end{equation}
\end{theorem}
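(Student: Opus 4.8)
The plan is to follow the comparison-based approach developed in the authors' earlier works, but now organized around obtaining an $L^\infty$-bound on $v$ when $\gamma$ may grow at infinity. First I would introduce the auxiliary function $w(t,x) := \int_0^t \gamma(v(s,x))\,u(s,x)\,ds + $ (a correction term solving the elliptic part), so that $\partial_t w = \gamma(v)u$ and, using $u = v - \Delta v$, the equation $\partial_t u = \Delta(u\gamma(v))$ translates into a scalar quasilinear parabolic equation for $w$ of the form $\partial_t w = \gamma(v)\Delta w + (\text{lower order})$ with $v$ recovered from $w$ through the (non-local) resolvent of $-\Delta + 1$. The sign structure here is the key: since $u \ge 0$, $w$ is nondecreasing in $t$, and parabolic comparison will give pointwise bounds on $w$, hence on $v$, from bounds on suitable sub/supersolutions.

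Next, I would establish the fundamental low-order estimates: conservation of mass \eqref{mc} follows by integrating \eqref{ks1} over $\Omega$ and using \eqref{ks3}; elliptic regularity applied to \eqref{ks2} then gives $\|v(t)\|_1 \le C\|u(t)\|_1 = Cm$ for all $t$, and more generally $v \in L^\infty(0,\infty;L^p)$ whenever $u \in L^\infty(0,\infty;L^p)$. The heart of the matter is to bootstrap from the $L^1$-bound on $v$ to an $L^\infty$-bound on $v$, and this is where the dichotomy on $\gamma$ enters. When $\gamma' \ge 0$, the map $s \mapsto \gamma(s)$ is nondecreasing, so large values of $v$ correspond to large motility, i.e. strong diffusion, which should prevent concentration; one makes this rigorous by comparison, choosing a spatially constant (in $x$) supersolution for $w$ driven by the mass $m$ and showing it stays bounded because the nonlinearity $\gamma$ is increasing. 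When instead $\limsup_{s\to\infty}\gamma(s) = \infty$ without monotonicity, one cannot argue pointwise, but one can still find levels $K$ with $\gamma(K)$ as large as desired; truncating $v$ at such a level and testing the $w$-equation against $(w - \phi_K)_+$ for an appropriate comparison function $\phi_K$ yields, after using the non-local elliptic relation between $v$ and $w$, a differential inequality forcing $\|v(t)\|_\infty$ to remain controlled. In both cases the output is $\sup_{t \ge 0}\|v(t)\|_\infty =: M < \infty$.

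Once $v$ is bounded by $M$, equation \eqref{ks1} becomes uniformly parabolic: $\gamma$ is bounded above and below on $[\,\inf v, M\,] \subset (0,\infty)$ by \eqref{Aad0} and the positivity $v \ge 0$ (indeed $v > 0$ by the strong maximum principle applied to \eqref{ks2} since $u \ge 0$, $u \not\equiv 0$). At that stage I would invoke the theory for the uniformly parabolic case — precisely the situation treated in \cite{XiaoJiang2022} and recalled in the introduction — to upgrade the $L^1$-bound on $u$ to an $L^\infty$-bound, using a Moser–Alikakos iteration on \eqref{ks1} written as $\partial_t u = \mathrm{div}(\gamma(v)\nabla u + u\gamma'(v)\nabla v)$ together with the bound on $\|\nabla v\|_\infty$ coming from elliptic regularity for \eqref{ks2}. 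Global existence and the claimed regularity then follow from standard parabolic theory: a local classical solution is constructed (e.g. via the fixed-point/semigroup scheme or by appealing to \cite{JiLa2021} for the case $\gamma$ bounded, after a suitable cutoff of $\gamma$ at the a priori level $M$), and the uniform estimates \eqref{mc}–\eqref{ub} preclude finite-time blowup, so the solution is global; uniqueness is a routine Gronwall argument on the difference of two solutions using the Lipschitz continuity of $\gamma$ on compact subsets of $(0,\infty)$ and the regularizing effect of \eqref{ks2}.

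**Main obstacle.** The crux is the second paragraph: deriving the uniform $L^\infty$-bound on $v$ in the non-monotone, asymptotically unbounded case \eqref{A0}. The difficulty is that $\gamma$ may oscillate wildly, so one has no pointwise monotone control and must instead combine the non-local relation $u = v - \Delta v$ with a carefully chosen comparison function adapted to a sequence of levels $K_j \to \infty$ along which $\gamma(K_j) \to \infty$; making the resulting differential inequality close (rather than merely bounding $v$ on finite time intervals) is the delicate point, and I expect it to require the full strength of the comparison machinery together with the conservation of mass to fix the relevant constants.
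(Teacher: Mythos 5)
Your high-level architecture matches the paper's: derive a uniform $L^\infty$-bound on $v$, then invoke H\"older/Schauder/bootstrap theory to upgrade to an $L^\infty$-bound on $u$ and global classical regularity, with conservation of mass by integration. The monotone ($\gamma'\ge 0$) case is also roughly right: you bound $\mathcal{A}^{-1}[u\gamma(v)]$ by $\gamma(\|v\|_\infty)v$, compare with a spatially constant supersolution, and observe that monotonicity of $\gamma$ forces the supersolution downward — this is essentially what the paper does in Lemma~\ref{lem2.2}, except the paper compares $v$ directly (not an auxiliary $w$) against the ODE $\dot V + V\gamma(V)=\gamma(\|v\|_\infty)V$ with $V(0)=\|v^{in}\|_\infty$.

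The genuine gap is in the non-monotone case under \eqref{A0}, and you acknowledge it yourself (``making the resulting differential inequality close... is the delicate point''). The strategy you sketch — pick levels $K_j$ with $\gamma(K_j)\to\infty$ and test against $(w-\phi_K)_+$ — is not what closes the argument. Two specific problems. First, it is not enough to find levels where $\gamma$ is merely \emph{large}; the paper's Lemma~\ref{lem2.3} constructs a level $s_*\ge\|v^{in}\|_\infty$ which is a \emph{running maximum} of $\gamma$ on $[v_*,s_*]$, i.e.\ $\gamma(s_*)=\max_{[v_*,s_*]}\gamma$. That property, not largeness of $\gamma(s_*)$, is what the comparison requires; \eqref{A0} is only used to guarantee such a level exists beyond $\|v^{in}\|_\infty$. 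Second, with a non-monotone $\gamma$ the bound $u\gamma(v)\le\gamma(\|v\|_\infty)\mathcal{A}[v]$ is useless and no truncation/energy test in $w$ straightforwardly tames the right-hand side of \eqref{fj}; the paper instead splits $\gamma'=\gamma_i'+\gamma_d'$ above $s_*$ into its nonnegative and nonpositive parts, pushes the decreasing part \emph{inside} the operator $\mathcal{A}$ via its primitive $\Gamma_d\le 0$ (giving $u\gamma(v)\le u[\gamma(s_*)+\gamma_i(\|v\|_\infty)]+\mathcal{A}[\Gamma_d(v)]$, Lemma~\ref{lem2.4}), and then shows by a pointwise ODE comparison together with a Gronwall estimate on $(V-s_*)_+$ that the resulting supersolution $V$ with $V(0)=s_*$ never exceeds $s_*$ (Proposition~\ref{prop2.5}). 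None of this structure — the running-maximum level, the increasing/decreasing decomposition, the trick of absorbing $\gamma_d$ into $\mathcal{A}$, the closure via $(V-s_*)_+$ — appears in your proposal, and without it the $L^\infty$-bound on $v$ in the non-monotone case remains unproven. Everything downstream of that bound (uniform parabolicity, Moser--Alikakos, local theory, uniqueness) you describe correctly.
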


Combining the outcome of Theorem~\ref{thm1} with \cite[Theorem~1.1]{JiLa2021} provides the existence of a unique non-negative classical solution to~\eqref{ks} under the sole assumption~\eqref{Aad0} on $\gamma$, as stated now for future reference.

\begin{corollary}\label{cor2}
	Assume  that $\gamma$ satisfies~\eqref{Aad0} and that $u^{in}\in W_+^{1,\infty}(\Omega)$ with $m=\|u^{in}\|_1>0$. Then there is a unique non-negative global classical solution to~\eqref{ks} which satisfies the conservation of matter~\eqref{mc}.
\end{corollary}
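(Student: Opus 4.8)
The plan is to derive the corollary from Theorem~\ref{thm1} and \cite[Theorem~1.1]{JiLa2021} by a straightforward dichotomy on the asymptotic behaviour of $\gamma$. Since $\gamma$ is continuous and positive on $(0,\infty)$ by~\eqref{Aad0}, the quantity $\limsup_{s\to\infty}\gamma(s)\in(0,\infty]$ is well defined, and exactly one of the following two mutually exclusive alternatives holds: either $\limsup_{s\to\infty}\gamma(s)=\infty$, which is precisely~\eqref{A0}, or $\limsup_{s\to\infty}\gamma(s)<\infty$, which is precisely~\eqref{Aa1}.

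In the first case, I would invoke Theorem~\ref{thm1} through its second alternative, corresponding to hypothesis~\eqref{A0}: for $u^{in}\in W^{1,\infty}_+(\Omega)$ with $m=\|u^{in}\|_1>0$ it already grants a unique non-negative global classical solution to~\eqref{ks} obeying the conservation of matter~\eqref{mc} (and even the uniform bound~\eqref{ub}). In the second case, \eqref{Aa1} together with the continuity~\eqref{Aad0} ensures that $\gamma\in L^\infty(s,\infty)$ for every $s>0$, which is exactly the assumption under which \cite[Theorem~1.1]{JiLa2021} produces, for the same class of initial data, a unique non-negative global classical solution to~\eqref{ks} satisfying~\eqref{mc}. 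Since the two alternatives exhaust all $\gamma$ fulfilling~\eqref{Aad0} and both quoted statements are formulated for $u^{in}\in W^{1,\infty}_+(\Omega)$ with positive total mass, the conclusion follows at once. Note that the monotone alternative $\gamma'\ge 0$ of Theorem~\ref{thm1} plays no independent role in this argument: a non-decreasing positive $\gamma$ either tends to $+\infty$, and then~\eqref{A0} holds, or it is bounded, and then~\eqref{Aa1} holds.

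In truth there is no genuine obstacle in this deduction, its entire content being the union of the two cited existence theorems. The only point to keep in mind is that the case distinction must be recognised as exhaustive and that, in each regime, precisely one of the two theorems applies; hence the two constructions never need to be reconciled, and both existence, uniqueness, and the conservation of matter are inherited directly from whichever theorem is used.
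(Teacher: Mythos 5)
Your proof is correct and follows exactly the same route as the paper: a dichotomy according to whether $\limsup_{s\to\infty}\gamma(s)$ is infinite (case~\eqref{A0}, settled by Theorem~\ref{thm1}) or finite (case~\eqref{Aa1}, settled by \cite[Theorem~1.1]{JiLa2021}). Your closing remark that the monotone alternative of Theorem~\ref{thm1} is not needed here is accurate and a nice clarification.
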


 Another consequence of Theorem~\ref{thm1} is the boundedness of the non-negative global classical solution to~\eqref{ks} given by Corollary~\ref{cor2} in the chemorepulsive case $\gamma'\ge 0$. The dynamics is more complex in the chemoattractive case $\gamma'\le 0$ as the solution to~\eqref{ks} may blow up in infinite time  according to~\cite{FuJi2020}.

We next supplement Theorem~\ref{thm1} with explicit $L^\infty$-bounds on $u$ or $v$ when $\gamma$ is endowed with additional properties. First, 
define the (unbounded) linear operator $\mathcal{A}$ on $L^2(\Omega)$ by 
\begin{equation}
\begin{split}
	D(\mathcal{A}) & := \left\{ z\in H^2(\Omega)\ :\ \nabla z\cdot\mathbf{n} = 0 \;\text{ on }\; \partial\Omega\right\},\\
	\mathcal{A}[z] & := z - \Delta z, \qquad z\in D(\mathcal{A}).
\end{split}\label{opA}
\end{equation}
Then $v = \mathcal{A}^{-1}[u]$ according to~\eqref{ks2} and~\eqref{ks3} and $v^{in} := \mathcal{A}^{-1}[u^{in}]\in W^{3,p}(\Omega)$ for any $1\leq p<\infty$ by standard regularity theory of elliptic equations. In particular, $v^{in}$ is bounded and positive in $\bar{\Omega}$, the latter following from Lemma~\ref{lem2} below while the former is ensured by Sobolev embeddings.

Now, we show that explicit upper and lower bounds on $v$ are available under a local monotonicity assumption on $\gamma$, implying the existence of an invariant region for the second component $v$. 

\begin{theorem}\label{thm4}
	Assume that $\gamma$ satisfies~\eqref{Aad0} and that $u^{in}\in W_+^{1,\infty}(\Omega)$ with $m=\|u^{in}\|_1>0$. Let $(u,v)$ be the corresponding non-negative classical solution to~\eqref{ks} given by Corollary~\ref{cor2}. If
	\begin{equation}
		\gamma'\ge 0 \;\;\;\text{ on }\;\; \Big[ \min_{\bar{\Omega}}\{v^{in}\} ,\max_{\bar{\Omega}}\{v^{in}\} \Big]\,, \label{mg}
\end{equation} 
	then the solution to~\eqref{ks} satisfies the uniform bound~\eqref{ub}. More precisely, 
	\begin{equation*}
		\min_{\bar{\Omega}}\{v^{in}\}  \le v(t,x) \le \max_{\bar{\Omega}}\{v^{in}\}, \qquad (t,x)\in [0,\infty)\times\bar{\Omega}.
	\end{equation*}
\end{theorem}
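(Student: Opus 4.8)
The plan is to reduce the claim to a maximum‑principle statement for a closed, nonlocal parabolic equation satisfied by $v$ alone. First I would record the operator identity $\mathcal{A}^{-1}[\Delta z] = \mathcal{A}^{-1}[z] - z$, valid for $z\in D(\mathcal{A})$ because $\Delta = \mathrm{id} - \mathcal{A}$ on $D(\mathcal{A})$; note that $u(t)\gamma(v(t))$ belongs to $D(\mathcal{A})$ for $t>0$ thanks to the no‑flux condition~\eqref{ks3}; and apply $\mathcal{A}^{-1}$ to~\eqref{ks1}. Since $\mathcal{A}^{-1}$ commutes with $\partial_t$ and $v = \mathcal{A}^{-1}[u]$, this produces
\begin{equation*}
	\partial_t v + u\gamma(v) = \mathcal{A}^{-1}\big[u\gamma(v)\big], \qquad \nabla v \cdot \mathbf{n} = 0 \;\text{ on }\; \partial\Omega, \qquad v(0) = v^{in},
\end{equation*}
with $u = \mathcal{A}[v] = v - \Delta v$. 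The useful feature of this reformulation is that every spatial constant is a stationary solution, so $\mu := \min_{\bar\Omega}\{v^{in}\}$ and $M := \max_{\bar\Omega}\{v^{in}\}$ are natural barrier candidates, and~\eqref{mg} is precisely an assumption on $\gamma$ over the interval spanned by these barriers at time $0$.

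Next I would derive differential inequalities for $V(t) := \|v(t)\|_\infty$ and $\nu(t) := \min_{\bar\Omega} v(t,\cdot)$, which are locally Lipschitz on $(0,\infty)$ and continuous up to $t=0$. Fix, for a.e. $t$, a point $x_t\in\bar\Omega$ with $v(t,x_t)=V(t)$; then $\Delta v(t,x_t)\le 0$ — immediate if $x_t$ is interior, and still true if $x_t\in\partial\Omega$ because $\nabla v(t,x_t)\cdot\mathbf{n}=0$ and $\partial\Omega$ is smooth — so $u(t,x_t) = v(t,x_t) - \Delta v(t,x_t) \ge V(t)$. The key (and the only genuinely nonlocal) estimate is that, since $\mathcal{A}^{-1}$ is order preserving and $\mathcal{A}^{-1}[u]=v$,
\begin{equation*}
	\mathcal{A}^{-1}\big[u\gamma(v)\big](t,x_t) \le \Big(\max_{\bar\Omega}\gamma\big(v(t,\cdot)\big)\Big)\,\mathcal{A}^{-1}[u](t,x_t) = \Big(\max_{\bar\Omega}\gamma\big(v(t,\cdot)\big)\Big)\,V(t).
\end{equation*}
Evaluating the reformulated equation at $x_t$ and using the classical identity $\tfrac{d}{dt}V(t)=\partial_t v(t,x_t)$ for a.e. $t$ then gives
\begin{equation*}
	\frac{dV}{dt}(t) \le \Big(\max_{\bar\Omega}\gamma\big(v(t,\cdot)\big) - \gamma\big(V(t)\big)\Big)\,V(t) \qquad \text{for a.e. } t>0,
\end{equation*}
and a symmetric computation at a minimum point of $v(t,\cdot)$, where $u(t,x_t)\le\nu(t)$, yields $\tfrac{d}{dt}\nu(t) \ge \big(\min_{\bar\Omega}\gamma(v(t,\cdot)) - \gamma(\nu(t))\big)\nu(t)$.

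I would then close the argument by observing that, as long as $v(t,\cdot)$ takes values in $[\mu,M]$, assumption~\eqref{mg} forces $\max_{\bar\Omega}\gamma(v(t,\cdot)) = \gamma(V(t))$ and $\min_{\bar\Omega}\gamma(v(t,\cdot)) = \gamma(\nu(t))$, so that $\tfrac{d}{dt}V\le 0$ and $\tfrac{d}{dt}\nu\ge 0$; together with $V(0)=M$ and $\nu(0)=\mu$ this should confine $v$ to $[\mu,M]$. Making this rigorous is the delicate point, and I expect it to be the main obstacle: one cannot simply compare $v$ with a constant slightly above $M$, because~\eqref{mg} gives no information on $\gamma$ beyond $M$. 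Instead I would estimate the excess $E(t) := \big(V(t)-M\big)_+ + \big(\mu-\nu(t)\big)_+$; writing $\max_{\bar\Omega}\gamma(v(t,\cdot)) = \max_{[\nu(t),V(t)]}\gamma$, using $\mu - E(t)\le\nu(t)\le V(t)\le M + E(t)$, and invoking the local Lipschitz continuity of $\gamma$ near $[\mu,M]$ (recall $\gamma\in C^3$), the two differential inequalities give $\tfrac{d}{dt}E(t)\le C\,E(t)$ for a.e.\ $t$ at which $E(t)$ is small, with $C=C(\gamma,\mu,M)$. Since $E(0)=0$, Gronwall's lemma forces $E\equiv 0$, which is exactly the assertion $\min_{\bar\Omega}\{v^{in}\}\le v(t,x)\le\max_{\bar\Omega}\{v^{in}\}$.

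Finally, this pointwise bound confines $\gamma(v)$ to the compact subinterval $\big[\min_{[\mu,M]}\gamma,\ \max_{[\mu,M]}\gamma\big]$ of $(0,\infty)$, so~\eqref{ks1} becomes uniformly parabolic with bounded coefficients; I would then invoke standard parabolic regularity — equivalently, the boundedness result for bounded motility functions recalled in the introduction — to obtain $\sup_{t>0}\|u(t)\|_\infty<\infty$, whence~\eqref{ub} since $\|v(t)\|_\infty\le M$ for all $t\ge 0$. The whole scheme hinges on the single observation $\mathcal{A}^{-1}[u\gamma(v)]\le(\sup_\Omega\gamma(v))\,\mathcal{A}^{-1}[u]=(\sup_\Omega\gamma(v))\,v$, which turns the nonlocal right‑hand side into a pointwise multiple of $v$ and thereby makes the comparison machinery available.
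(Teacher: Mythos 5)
Your proposal is correct, and it reaches the same pointwise bound as the paper, but by a genuinely different route. Both arguments rest on the same one‑line nonlocal estimate — since $\mathcal{A}^{-1}$ is order‑preserving and $\mathcal{A}^{-1}[u]=v$, one has $\mathcal{A}^{-1}[u\gamma(v)]\le\big(\max_{\bar\Omega}\gamma(v)\big)v$ together with the analogous lower bound — but the way this is turned into an invariant region differs. The paper first proves Lemma~\ref{lem4.1}, which requires \emph{strict} inequalities $a<\min v^{in}\le\max v^{in}<b$: it introduces the stopping time $\tau=\inf\{t:M(t)>b\text{ or }\mu(t)<a\}$, compares $v$ with solutions $V_l,V_u$ of the scalar ODEs~\eqref{d05}--\eqref{d10} on $[0,\tau)$ via the parabolic comparison principle, shows $V_l\ge a$ and $V_u\le b$ there, hence $\tau=\infty$; the equality case of Theorem~\ref{thm4} is then obtained by shrinking $u^{in}$ towards a constant and passing to the limit using the continuous‑dependence result Proposition~\ref{prop2.6}. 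You instead differentiate $V(t)=\max_{\bar\Omega}v(t,\cdot)$ and $\nu(t)=\min_{\bar\Omega}v(t,\cdot)$ directly (a Danskin/envelope‑type step, valid since $v\in C^{1,2}$, including at boundary maxima where the Neumann condition forces $\nabla v=0$ and the argument sketched gives $\Delta v\le0$), obtain the differential inequalities $dV/dt\le(\max_{[\nu,V]}\gamma-\gamma(V))V$ and $d\nu/dt\ge(\min_{[\nu,V]}\gamma-\gamma(\nu))\nu$, and close with a Gronwall estimate on the excess $E=(V-M)_++(\mu-\nu)_+$, exploiting the local Lipschitz continuity of $\gamma$ near $[\mu,M]$. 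Your route dispenses entirely with the approximation step and Proposition~\ref{prop2.6} — a genuine simplification — at the cost of the envelope differentiation of $V$ and $\nu$ and the careful Lipschitz bookkeeping to control $\max_{[\nu,V]}\gamma-\gamma(V)$ once $[\nu,V]$ sticks out of $[\mu,M]$; the paper's version stays entirely within the classical parabolic comparison principle. For the final passage from the $v$‑bound to~\eqref{ub}, what you describe (uniform parabolicity and a bootstrap) is exactly what Proposition~\ref{prop2.2} supplies, so that step would be best referenced directly to it rather than argued anew.
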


In the same vein, we have the following explicit upper bounds on $u$. 

\begin{theorem}\label{thm3}
		Assume that $\gamma$ satisfies~\eqref{Aad0}, along with~\eqref{mg}, and that $u^{in}\in W_+^{1,\infty}(\Omega)$ with $m=\|u^{in}\|_1>0$. Let $(u,v)$ be the corresponding  non-negative global classical solution to~\eqref{ks} provided by Corollary~\ref{cor2}. Then 
		\begin{equation*}
			\|u(t)\|_\infty \le \frac{\left\| u^{in} \gamma\big(v^{in}\big) \right\|_\infty}{\gamma(\min_{\bar{\Omega}}\{v^{in}\})} \le \frac{\|u^{in}\|_\infty \gamma\big(\max_{\bar{\Omega}}\{v^{in}\}\big)}{\gamma(\min_{\bar{\Omega}}\{v^{in}\})}\,, \qquad t\ge 0\,.
		\end{equation*} 
	Assume in addition that $\gamma'' \le 0$ in $\big[ \min_{\bar{\Omega}}\{v^{in}\} ,\max_{\bar{\Omega}}\{v^{in}\} \big]$. Then $\|u(t)\|_\infty \le \| u^{in}\|_\infty$ for $t\ge 0$.
\end{theorem}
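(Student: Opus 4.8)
The plan is to deduce both bounds from the invariant region for $v$ furnished by Theorem~\ref{thm4} --- note that its hypotheses are precisely \eqref{Aad0} and \eqref{mg} --- together with comparison arguments. Write $a := \min_{\bar\Omega}\{v^{in}\}$ and $b := \max_{\bar\Omega}\{v^{in}\}$; recall that $0 < a \le b$, that $\gamma$ is non-decreasing on $[a,b]$ by \eqref{mg}, and that $a \le v(t,x) \le b$ on $[0,\infty)\times\bar\Omega$ by Theorem~\ref{thm4}. Since $u^{in} = \mathcal{A}[v^{in}]$ with $0 \le u^{in} \le \|u^{in}\|_\infty$ on $\Omega$, the maximum principle for $\mathcal A$ also gives $0 \le v^{in} \le \|u^{in}\|_\infty$ on $\bar\Omega$, i.e. $b \le \|u^{in}\|_\infty$ --- a fact needed for the refined bound.

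For the first estimate I would work with $w := u\gamma(v) \ge 0$, which satisfies $\nabla w\cdot\mathbf{n} = 0$ on $\partial\Omega$ by \eqref{ks3} and $w(0) = u^{in}\gamma(v^{in})$. Differentiating \eqref{ks2} in time and using \eqref{ks1} gives $\mathcal{A}[\partial_t v] = \partial_t u = \Delta w$, hence $\partial_t v = \mathcal{A}^{-1}[\Delta w] = \mathcal{A}^{-1}[w] - w$ (as $\mathcal{A}[w] = w - \Delta w$); inserting this into $\partial_t w = \gamma(v)\Delta w + u\gamma'(v)\partial_t v$ and using $u = w/\gamma(v)$ yields
\begin{equation*}
	\partial_t w = \gamma(v)\,\Delta w + \frac{\gamma'(v)}{\gamma(v)}\,w\,\big(\mathcal{A}^{-1}[w] - w\big), \qquad (t,x) \in (0,\infty)\times\Omega.
\end{equation*}
Since $\mathcal{A}^{-1}[C] = C$ for every constant $C$, the constant $M := \|w(0)\|_\infty = \|u^{in}\gamma(v^{in})\|_\infty$ is a supersolution of this equation. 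Comparing $w$ with $M + \varepsilon(1+t)$, $\varepsilon > 0$, and letting $\varepsilon\to 0$ gives $w \le M$: at a first time $t_0 > 0$ at which $w - M - \varepsilon(1+t)$ would vanish, and at a maximum point $x_0 \in \bar\Omega$ of that difference, one has $w(t_0,x_0) = \|w(t_0)\|_\infty > 0$, so $\mathcal{A}^{-1}[w(t_0)](x_0) \le \|w(t_0)\|_\infty = w(t_0,x_0)$ by the maximum principle for $\mathcal A$, while $\gamma'(v(t_0,x_0)) \ge 0$ by \eqref{mg} because $v(t_0,x_0)\in[a,b]$, and $\Delta w(t_0,x_0) \le 0$ if $x_0\in\Omega$; hence $\partial_t w(t_0,x_0) \le 0$, the desired contradiction (a boundary maximum being handled in the usual way via $\nabla w\cdot\mathbf{n}=0$ and the smoothness of $\partial\Omega$). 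As $\gamma(v) \ge \gamma(a)$, this yields $\|u(t)\|_\infty = \|w(t)/\gamma(v(t))\|_\infty \le M/\gamma(a)$, which is the first inequality; the second follows from $u^{in}\gamma(v^{in}) \le \|u^{in}\|_\infty\,\gamma(b)$ on $\bar\Omega$, once more because $\gamma$ is non-decreasing on $[a,b]$.

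For the refined bound under the extra assumption $\gamma''\le 0$ on $[a,b]$, I would instead work directly with \eqref{ks1}: expanding it and replacing $\Delta v$ by $v-u$ from \eqref{ks2} gives
\begin{equation*}
	\partial_t u = \gamma(v)\,\Delta u + 2\gamma'(v)\,\nabla u\cdot\nabla v + \gamma''(v)\,|\nabla v|^2\,u + \gamma'(v)\,(v-u)\,u, \qquad (t,x)\in(0,\infty)\times\Omega,
\end{equation*}
together with $\nabla u\cdot\mathbf{n}=0$ on $\partial\Omega$. Treating $v$ as the (now known) function from Corollary~\ref{cor2}, the constant $U := \|u^{in}\|_\infty$ is a supersolution: since $v\in[a,b]$ we have $\gamma''(v)\le 0$ and $\gamma'(v)\ge 0$, and $v \le b \le U$, so the reaction terms evaluated at $u = U$ are $\le 0$. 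As $u^{in} \le U$, the same comparison argument works --- at a maximum point $x_0$ and first time $t_0 > 0$ of $u - U - \varepsilon(1+t)$ one has $u(t_0,x_0) = \|u(t_0)\|_\infty \ge U \ge b \ge v(t_0,x_0)$, whence $\nabla u(t_0,x_0)=0$, $\Delta u(t_0,x_0)\le 0$ and $\gamma''(v)|\nabla v|^2 u + \gamma'(v)(v-u)u \le 0$ there, so $\partial_t u(t_0,x_0)\le 0$ --- and gives $u\le U$, i.e. $\|u(t)\|_\infty \le \|u^{in}\|_\infty$ for all $t\ge 0$.

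The only genuinely delicate point is the rigorous justification of these two comparison arguments: the $w$-equation carries the nonlocal term $\mathcal{A}^{-1}[w]$ and the $u$-equation the quadratic reaction term $-\gamma'(v)u^2$, and the boundary conditions are of Neumann type, so boundary maxima require the usual care (Hopf's lemma and the smoothness of $\partial\Omega$, or a localized barrier). One should also check that differentiating \eqref{ks2} in time is licit, which it is by the $C^{1,2}$-regularity of $(u,v)$ away from $t=0$. Every sign computation above becomes immediate once the pointwise bounds on $v$ from Theorem~\ref{thm4} are in hand.
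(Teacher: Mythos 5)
Your proposal is correct and follows essentially the same route as the paper: both bounds rest on the invariant region for $v$ from Theorem~\ref{thm4}, the first on the evolution equation for $u\gamma(v)$ (your $w$, the paper's $\varphi$) combined with the non-positivity of the nonlocal reaction at a spatial maximum, and the second on the expansion of $\partial_t u$ using $\Delta v = v-u$ with the signs of $\gamma'$ and $\gamma''$. The only differences are cosmetic — you run the comparison against explicit barriers $M+\varepsilon(1+t)$ rather than against the logistic-type ODE solution the paper introduces, and you present the two estimates in the reverse order.
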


We finally deal with the large time behaviour of solutions to~\eqref{ks} when $\gamma$ is non-decreasing and report that there is no pattern formation in that case, as intuitively expected in the chemorepulsive regime. 

\begin{theorem}\label{thm5}
	Assume that $\gamma$ satisfies~\eqref{Aad0} and~\eqref{mg}. Consider $u^{in}\in W_+^{1,\infty}(\Omega)$ with $m=\|u^{in}\|_1>0$ and denote the corresponding  non-negative global classical solution to~\eqref{ks} provided by Corollary~\ref{cor2} by $(u,v)$. Then
	\begin{equation*}
		\lim\limits_{t\to\infty} \left( \left\| u(t) - \frac{m}{|\Omega|} \right\|_\infty + \left\| v(t) - \frac{m}{|\Omega|} \right\|_{W^{1,\infty}} \right) = 0.
	\end{equation*}
\end{theorem}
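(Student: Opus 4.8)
\textbf{Proof proposal for Theorem~\ref{thm5}.}

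The plan is to exploit the invariant region established in Theorem~\ref{thm4}, which guarantees that $v$ stays confined in the compact interval $I:=[\min_{\bar\Omega} v^{in}, \max_{\bar\Omega} v^{in}]$, together with the upper bound on $u$ from Theorem~\ref{thm3}. On this interval $\gamma$ is bounded above and below by positive constants, so equation~\eqref{ks1} is uniformly parabolic with bounded, smooth coefficients, and parabolic regularity then upgrades the a priori bounds into uniform $C^{2+\alpha}$-bounds on $u$ and $C^{2+\alpha}$-bounds on $v$ on time intervals $[t,t+1]$ for $t\ge 1$. The first step is therefore to record these uniform parabolic estimates, which provide the compactness needed to pass to limits along time sequences.

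Second, I would identify the natural Lyapunov functional. Because $\gamma'\ge 0$ on $I$ the system is in the chemorepulsive regime, and the appropriate energy is
\begin{equation*}
	\mathcal{E}[u,v] := \int_\Omega \big( u \log u - u \big)\, dx + \frac{1}{2}\int_\Omega \big( |\nabla v|^2 + v^2 \big)\, dx - \int_\Omega u v\, dx,
\end{equation*}
or, more precisely, the variant adapted to the parabolic-elliptic structure; differentiating along the flow and using $v=\mathcal{A}^{-1}[u]$ together with~\eqref{ks1}--\eqref{ks3} one obtains $\frac{d}{dt}\mathcal{E}[u(t),v(t)] = -\int_\Omega u\gamma(v)\,\big|\nabla\big(\log u - v\big)\big|^2\,dx \le 0$, after integrating by parts and using the no-flux boundary conditions. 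Since $u$ is bounded below away from zero is \emph{not} immediate, a little care is needed: the dissipation should be written so that the positivity of $\gamma(v)\ge \gamma(\min_{\bar\Omega} v^{in})>0$ is all that is required, and the term $u|\nabla\log u|^2 = 4|\nabla\sqrt u|^2$ handles the possible vanishing of $u$.

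Third, I would run the standard $\omega$-limit set argument. The uniform bounds make $\mathcal{E}[u(t),v(t)]$ bounded below, so it converges; integrating the dissipation identity over $(0,\infty)$ gives $\int_0^\infty\!\!\int_\Omega u\gamma(v)|\nabla(\log u - v)|^2\,dx\,dt<\infty$. Combining this with the uniform $C^{2+\alpha}$-bounds and a compactness/contradiction argument along time sequences $t_k\to\infty$, every element $(u_\infty,v_\infty)$ of the $\omega$-limit set is a stationary solution with $u_\infty\gamma(v_\infty)$ spatially constant and, crucially, $\nabla(\log u_\infty - v_\infty)\equiv 0$; feeding this into the elliptic equation $-\Delta v_\infty + v_\infty = u_\infty = e^{c}e^{v_\infty}$ for a constant $c$, the only solution in the chemorepulsive regime is the homogeneous one $v_\infty\equiv u_\infty\equiv m/|\Omega|$ (uniqueness of the constant steady state follows from conservation of mass~\eqref{mc} and a simple maximum-principle/convexity argument ruling out non-constant solutions of $-\Delta v_\infty + v_\infty = e^{c+v_\infty}$). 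By connectedness of the $\omega$-limit set the convergence is along the whole trajectory, and elliptic regularity promotes $v(t)\to m/|\Omega|$ in $W^{1,\infty}$ as claimed.

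The main obstacle I anticipate is the rigorous justification of the Lyapunov identity and, more subtly, the uniqueness of the constant steady state: one must rule out non-constant solutions of the semilinear elliptic problem $-\Delta w + w = \lambda e^{w}$ on $\Omega$ with $\int_\Omega e^w = m$, which is where the sign condition $\gamma'\ge 0$ (repulsion) enters decisively — in the attractive case such non-constant steady states do exist. A clean way around the full classification is to note that the relation $\nabla(\log u_\infty - v_\infty)=0$ forces $u_\infty = \lambda e^{v_\infty}$, substitute to get $-\Delta v_\infty = \lambda e^{v_\infty} - v_\infty$, multiply by $v_\infty - \bar v_\infty$ and integrate, and use convexity of the exponential together with the mass constraint to conclude $v_\infty$ is constant; this avoids any bifurcation analysis and keeps the argument self-contained.
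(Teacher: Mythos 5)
Your proposed Lyapunov functional is the wrong one for this model, and the claimed dissipation identity fails for general non\-/decreasing $\gamma$. Writing $\mathcal{F}=\int_\Omega(u\log u-u)\,dx-\frac12\int_\Omega uv\,dx$ (the correct parabolic-elliptic normalization of your $\mathcal{E}$, after using $\int u\,\partial_tv=\int v\,\partial_tu$), one gets
\begin{equation*}
\frac{d}{dt}\mathcal{F}=\int_\Omega(\log u-v)\,\partial_tu\,dx
=-\int_\Omega u\gamma(v)\Big[\nabla\log u+\tfrac{\gamma'(v)}{\gamma(v)}\nabla v\Big]\cdot\big[\nabla\log u-\nabla v\big]\,dx\,,
\end{equation*}
since $\nabla(u\gamma(v))=u\gamma(v)\nabla\log\big(u\gamma(v)\big)$. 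This is a perfect square $-\int u\gamma(v)|\nabla(\log u-v)|^2$ only when $\gamma'/\gamma\equiv-1$, i.e.\ $\gamma(s)=Ce^{-s}$, which is the \emph{attractive} exponential motility. For a non\-/decreasing $\gamma$ the two factors in the dot product are not proportional; expanding gives the term $+\int_\Omega u\,\gamma'(v)\,|\nabla v|^2\,dx\ge 0$ of the wrong sign, plus an indefinite cross term, so $\mathcal{F}$ is not monotone along the flow. Consequently the integrability of the dissipation, the relation $\nabla(\log u_\infty-v_\infty)=0$, and hence the reduction to the steady-state equation $-\Delta v_\infty+v_\infty=\lambda e^{v_\infty}$ all collapse; that Gelfand-type equation is in any case specific to $\gamma=e^{-v}$ and is not what general steady states of~\eqref{ks} satisfy (they obey $u\gamma(v)=\mathrm{const}$, i.e.\ $-\Delta v+v=c/\gamma(v)$).

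The paper avoids this entirely by using a different, much simpler Lyapunov functional: $t\mapsto\frac12\big(\|\nabla v(t)\|_2^2+\|v(t)\|_2^2\big)$. Testing $\partial_t u=\Delta(u\gamma(v))$ against $v$ and using $u=v-\Delta v$ yields
\begin{equation*}
\frac12\frac{d}{dt}\big(\|\nabla v\|_2^2+\|v\|_2^2\big)
=-\int_\Omega\big(v\gamma'(v)+\gamma(v)\big)|\nabla v|^2\,dx-\int_\Omega\gamma(v)|\Delta v|^2\,dx\le 0\,,
\end{equation*}
which is non\-/positive precisely under~\eqref{mg} together with $v\ge v_*>0$ from Theorem~\ref{thm4} and Lemma~\ref{lem2}. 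Moreover, since $\gamma(v)\ge\gamma(\min_{\bar\Omega}\{v^{in}\})>0$ on the invariant region, the dissipation controls $\|\nabla v\|_2^2+\|\Delta v\|_2^2$ directly, so integrating in time forces $v(t_k)$ to approach a constant in $H^2$ along sequences $t_k\to\infty$, and then $u=v-\Delta v$ and mass conservation~\eqref{mc} identify the limit as $m/|\Omega|$ without any steady-state classification. This is the argument of \cite[Theorem~1.3]{AhYo2019}, which the paper invokes after noting that the same functional is monotone whenever $v\gamma'(v)+\gamma(v)\ge 0$. Your surrounding compactness/regularity scaffolding is fine, but you should replace the free-energy functional by the $H^1$-norm of $v$ and rework the identification of the $\omega$-limit accordingly.
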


A similar stability result is proved in \cite[Theorem~1.3]{AhYo2019} when $\gamma(s) = s^{-k}$ for some $k\in (0,1]$. Their proof  relies on the availability of a Liapunov functional for~\eqref{ks} when $\gamma(s) = s^{-k}$ for some $k\in (0,1]$ which turns out to be also a Liapunov functional when $\gamma$ is non-decreasing. The proof of of Theorem~\ref{thm5} is thus the same as that of \cite[Theorem~1.3]{AhYo2019}, to which we refer.

\medskip

The cornerstone of the proof of our existence and boundedness results is the derivation of an upper bound for the second component $v$, which is achieved by a new delicate comparison argument developed in the current contribution. Let us illustrate the difficulty to be faced and sketch the main idea in the simple case where $\gamma$ satisfies \eqref{Aad0} and $\gamma'\ge 0$ on $(0,\infty)$. 
To begin with, we derive from~\eqref{ks1}-\eqref{ks2} the following key identity
\begin{equation}
	\partial_t v - \gamma(v) \Delta v + v \gamma(v) = \mathcal{A}^{-1}[u\gamma(v)]\,, \qquad (t,x)\in (0,\infty)\times \Omega\,, \label{fj}
\end{equation}
recalling that $\mathcal{A}$ is defined in~\eqref{opA}.  This identity is uncovered in~\cite{FuJi2020} and has since then been used efficiently to investigate the global existence and boundedness of classical solutions to~\eqref{ks} and its variants, see \cite{Fuji2023, FuJi2021a, FuJi2021b, FuSe2022a, FuSe2022b, Jian2022, JiLa2021, JLZ2022, LyWa2022, LyWa2023, DLTW2023, LiJi2021, XiaoJiang2022}, where an upper bound on~$\gamma$ is essentially used to control the non-local source term on the right-hand side of~\eqref{fj}. Indeed, supposing that $\gamma(v)$ is bounded from above on $[0,T]$ by some possibly time-dependent upper bound $\gamma^*(T)>0$, an application of the elliptic comparison principle yields an upper control of the non-local term by a term growing linearly as a function of~$v$ which reads
\begin{equation*}
	\mathcal{A}^{-1}[u\gamma(v)] \leq \gamma^*(T) \mathcal{A}^{-1}[u] = \gamma^*(T) v\,, \qquad (t,x)\in [0,T]\times\bar{\Omega}\,.
\end{equation*}
We also refer to~\cite{JiLa2021} for a more tricky derivation of  a sublinear control of this term when~$\gamma$ is non-increasing on $(0,\infty)$. Then a systematic argument based on comparison techniques and Moser-Alikakos iterations is developed in \cite{FuJi2020, FuJi2021b, XiaoJiang2022, JiLa2021} to derive estimate for $v$ in $L^\infty\big((0,T)\times\Omega\big)$.

In contrast, when $\gamma$ is non-decreasing and unbounded at infinity, the non-local term features in principle a superlinear growth with respect to~$v$. Indeed, since $\gamma(v)\leq \gamma(\|v\|_\infty)$, it follows that
\begin{equation*}
	\mathcal{A}^{-1}[u\gamma(v)] \leq \gamma(\|v\|_\infty) v\,, \qquad (t,x)\in (0,\infty)\times\Omega\,,
\end{equation*}
and thus
\begin{equation}\label{supeq}
	\partial_t v - \gamma(v) \Delta v + v \gamma(v)\leq \gamma(\|v\|_\infty)v\,,\qquad (t,x)\in(0,\infty) \times\Omega\,.
\end{equation}
The previous methods fail in this case due to the superlinear dependence of the right-hand side on $\|v\|_\infty$.

To overcome this difficulty, we develop a novel approach to derive time-independent upper bounds on $v$ solely relying on comparison techniques. Specifically, introducing the solution $V$ to the ordinary differential equation
\begin{equation}\label{supeq2}
	\begin{split}
		\frac{dV}{dt} + V\gamma(V)  & = \gamma(\|v\|_\infty) V\,, \qquad t\ge 0\,, \\
		V(0)  & = \|v^{in}\|_\infty\,,
	\end{split}
\end{equation}
recalling that $v^{in}=\mathcal{A}^{-1}[u^{in}]$, an immediate consequence of~\eqref{supeq2} is that $V$ is a supersolution to~\eqref{supeq} and we deduce from the parabolic comparison principle that $v(t,x)\leq V(t)$ for $(t,x)\in [0,\infty)\times \bar{\Omega}$. In particular, $\|v(t)\|_\infty \le V(t)$ for $t\ge 0$ and the monotonicity of~$\gamma$, the non-negativity of $V$,  and~\eqref{supeq2} entail that $dV/dt=(\gamma(\|v\|_\infty)-\gamma(V))V\le 0$. As a result, we have $\|v(t)\|_\infty\leq V(t)\leq \|v^{in}\|_\infty$ for $t\ge 0$, as stated in Theorem~\ref{thm4}. The lower bound in Theorem~\ref{thm4} is derived with a similar argument.

In the general case where $\gamma$ satisfies~\eqref{Aad0} but need not be monotone, the main idea is to split $\gamma$ as a sum of its increasing and decreasing parts. A more delicate argument combining the above technique and the monotonicity trick developed in our previous work~\cite{JiLa2021} is carried out to derive the time-independent upper bound for $v$; see Section~\ref{sec2.2}. Once the upper bound for $v$ is obtained, we accomplish the proof according to Proposition~\ref{prop2.2}.

The remainder of this paper is organized as follows. In Section~\ref{sec1a}, we recall the local existence result and establish a blowup criterion, which ensures that global existence of classical solutions to~\eqref{ks} is a direct, though far from straightforward, consequence of the boundedness of~$v$. We then derive the uniform-in-time boundedness of $v$ by the new comparison technique and hence prove Theorem~\ref{thm1} in Section~\ref{sec2}. In addition, we discuss the continuous dependence of classical solutions of~\eqref{ks} on initial conditions. We next derive explicit $L^\infty$-estimates for $v$ and $u$ in Section~\ref{sec4} and Section~\ref{sec3}, respectively, under additional conditions on $\gamma$. Finally, we study the stabilization of solutions to~\eqref{ks} and prove Theorem~\ref{thm5} in Section~\ref{sec5}.

\section{Preliminaries}\label{sec1a}

We first state the local well-posedness of~\eqref{ks}, which can be proved as in~\cite[Lemma~3.1]{AhYo2019}.

\begin{proposition}\label{prop2.1}
	Assume that $\gamma$ satisfies~\eqref{Aad0} and that $u^{in}\in W_+^{1,\infty}(\Omega)$ with $m=\|u^{in}\|_1>0$. Then the initial-boundary value problem~\eqref{ks} has a unique non-negative classical solution 
	\begin{equation*}
	u\in C\big([0,T_{\mathrm{max}})\times\bar{\Omega}\big) \cap C^{1,2}\big((0,T_{\mathrm{max}})\times\bar{\Omega}\big), \qquad v\in C^{1,2}\big((0,T_{\mathrm{max}})\times\bar{\Omega}\big),
	\end{equation*} 
	defined on a maximal time interval $[0,T_{\mathrm{max}})$ with $T_{\mathrm{max}}\in (0,\infty]$ which satisfies
	\begin{equation}
		\|u(t)\|_1 = m =\|u^{in}\|_1, \qquad t\in [0,T_{\mathrm{max}}). \label{mcl}
	\end{equation}
In addition, if $T_{\mathrm{max}}<\infty$, then
\begin{equation}
	\lim_{t\to T_{\mathrm{max}}} \|u(t)\|_\infty = \infty. \label{gec}
\end{equation}
\end{proposition}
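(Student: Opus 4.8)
Since $\tau=0$, equations~\eqref{ks2}--\eqref{ks3} determine $v$ explicitly from $u$ as $v=\mathcal{A}^{-1}[u]$, so that~\eqref{ks} collapses to the single non-local quasilinear parabolic equation
\begin{equation*}
	\partial_t u = \mathrm{div}\big( \gamma(v)\nabla u + u\gamma'(v)\nabla v\big)\,, \qquad v = \mathcal{A}^{-1}[u]\,,
\end{equation*}
with homogeneous Neumann boundary condition $\nabla u\cdot\mathbf{n}=0$ on $\partial\Omega$ — indeed~\eqref{ks3} reduces to this one since $\nabla v\cdot\mathbf{n}=0$ and $\gamma(v)>0$ — and with $u(0)=u^{in}$. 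The structural observation that makes the parabolic-elliptic case comparatively tractable is that the possible degeneracy of~\eqref{ks1} is harmless: whenever $\|u\|_\infty$ is finite, $v=\mathcal{A}^{-1}[u]$ takes its values in a compact subinterval $[\delta(m),M]$ of $(0,\infty)$, the positive lower bound $\delta(m)$ following from the elliptic comparison principle applied to $\mathcal{A}[v]=u\ge0$ with $\|u\|_1=m>0$ (Lemma~\ref{lem2}), and the upper bound $M$ from $\|v\|_{W^{2,p}}\lesssim\|u\|_p\lesssim\|u\|_\infty$ together with the embedding $W^{2,p}(\Omega)\hookrightarrow C^{1,\beta}(\bar\Omega)$ for $p>N$. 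We may therefore replace $\gamma$ outside a suitable compact subinterval of $(0,\infty)$ by a smooth, bounded, uniformly positive extension, prove the result for this modified motility, and observe that the conservation of matter~\eqref{mcl} together with the strict positivity of $v$ guarantee a posteriori that the solution thus constructed solves the original system~\eqref{ks}.

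For the local existence I would set up a contraction-mapping argument on a small closed ball $\mathcal{B}$ of $C\big([0,T];C(\bar\Omega)\big)$ centred at $u^{in}$ and consisting of non-negative functions. Given $\bar u\in\mathcal{B}$, elliptic regularity yields $\bar v:=\mathcal{A}^{-1}[\bar u]\in C\big([0,T];W^{3,p}(\Omega)\big)$ for all $p<\infty$, and one defines $\Phi[\bar u]$ as the solution of the linear parabolic problem
\begin{equation*}
	\partial_t u = \mathrm{div}\big(\gamma(\bar v)\nabla u + u\gamma'(\bar v)\nabla\bar v\big) \;\text{ in }\; \Omega\,, \qquad \nabla u\cdot\mathbf{n}=0 \;\text{ on }\; \partial\Omega\,, \qquad u(0)=u^{in}\,.
\end{equation*}
Since $\gamma(\bar v)$ is Hölder continuous and bounded below by a positive constant, classical linear parabolic theory (Schauder estimates, or $L^p$-maximal regularity for the Neumann realisation of the principal part) provides $\Phi[\bar u]$ with quantitative bounds; testing against its negative part and using $u^{in}\ge0$ gives $\Phi[\bar u]\ge0$, integrating over $\Omega$ and using the no-flux condition gives $\|\Phi[\bar u](t)\|_1=m$, and since $\Phi[\bar u_1]-\Phi[\bar u_2]$ solves a linear parabolic equation whose source terms are controlled by $\|\bar v_1-\bar v_2\|_{W^{2,p}}\lesssim\|\bar u_1-\bar u_2\|_{C([0,T]\times\bar\Omega)}$, choosing $T$ small — depending only on $\|u^{in}\|_\infty$, $m$, $\Omega$, and $\gamma$ — makes $\Phi$ a contraction of $\mathcal{B}$ into itself, whose unique fixed point $u$, with $v=\mathcal{A}^{-1}[u]$, is the sought-after solution. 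The $W^{1,\infty}$-regularity of $u^{in}$ enters here only to control the spatial gradient of the iterates up to the initial time.

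The regularity of $(u,v)$ is then improved by bootstrapping: parabolic Hölder estimates for the divergence-form equation with bounded coefficients (De Giorgi--Nash--Moser) give joint Hölder continuity of $u$ on $(0,T]$, whence elliptic Schauder theory makes $v=\mathcal{A}^{-1}[u]$ two Hölder derivatives more regular in space; as $\gamma\in C^3$, the coefficients of the equation for $u$ become Hölder continuous and parabolic Schauder estimates, together with a further iteration, yield $u\in C^{1,2}\big((0,T]\times\bar\Omega\big)$ and $v\in C^{1,2}\big((0,T]\times\bar\Omega\big)$, so that $(u,v)$ is a classical solution satisfying the conservation of matter~\eqref{mcl} inherited from the fixed-point construction; the elliptic strong maximum principle moreover gives $v(t)>0$ in $\bar\Omega$ for every $t$, since $u(t)\ge0$ is not identically zero by~\eqref{mcl}. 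Uniqueness on the interval of existence follows from the same energy argument: for two solutions with identical data, $w=u_1-u_2$ solves a linear parabolic equation with locally bounded coefficients and source linear in $w$, so $\|w(t)\|_2^2$ obeys a Gronwall inequality with $\|w(0)\|_2=0$. Concatenating the local solutions then produces the maximal solution on $[0,T_{\mathrm{max}})$.

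Finally, the blow-up alternative~\eqref{gec} follows from a continuation argument: assume $T_{\mathrm{max}}<\infty$ and, for contradiction, that $\|u(t)\|_\infty$ does not tend to $\infty$ as $t\uparrow T_{\mathrm{max}}$, so that $\|u(t_n)\|_\infty$ remains bounded along some sequence $t_n\uparrow T_{\mathrm{max}}$; since the length of the local existence interval depends only on the $L^\infty$-norm of the initial datum (besides $m$, $\Omega$, and $\gamma$), the solution issued from the smooth datum $u(t_n)$ exists on an interval of length bounded below uniformly in $n$ and, by uniqueness, coincides with $(u,v)$ and prolongs it beyond $T_{\mathrm{max}}$ as soon as $t_n$ is close enough to $T_{\mathrm{max}}$ — contradicting the maximality of $T_{\mathrm{max}}$. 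I expect the one point genuinely requiring care to be precisely this quantitative form of local existence depending on $\|u^{in}\|_\infty$ alone, which rests on the smoothing effect of~\eqref{ks1} controlling $\|u(t)\|_{W^{1,\infty}}$ for $t>0$ in terms of $\sup_{[0,t]}\|u\|_\infty$; the remaining steps are a routine adaptation of~\cite[Lemma~3.1]{AhYo2019}.
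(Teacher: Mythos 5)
Your outline matches the paper's intent: the paper itself does not prove Proposition~\ref{prop2.1} but simply cites \cite[Lemma~3.1]{AhYo2019}, and the strategy you describe --- eliminating $v$ via $v=\mathcal{A}^{-1}[u]$, setting up a fixed point for the resulting non-local quasilinear parabolic equation, bootstrapping to classical regularity, and a continuation argument for the blow-up criterion --- is precisely the scheme of that reference. A few remarks on accuracy are in order.

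First, a minor slip: for $\bar u\in\mathcal{B}\subset C\big([0,T];C(\bar\Omega)\big)$, elliptic regularity yields $\bar v=\mathcal{A}^{-1}[\bar u]\in C\big([0,T];W^{2,p}(\Omega)\big)$ for all $p<\infty$ (hence $\bar v\in C\big([0,T];C^{1,\beta}(\bar\Omega)\big)$ for $p>N$), not $W^{3,p}$; the $W^{3,p}$-regularity requires $\bar u(t)\in W^{1,p}$, which holds for $u^{in}$ but is not propagated by membership in $\mathcal{B}$ alone. This does not affect the argument since $W^{2,p}\hookrightarrow C^{1,\beta}$ already gives the coefficient regularity you need.

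Second, you correctly pin down the one delicate step: the blow-up criterion~\eqref{gec} is a genuine limit rather than a $\limsup$, and this forces the local existence time to depend only on $\|u^{in}\|_\infty$ (and $m$, $\Omega$, $\gamma$), not on $\|u^{in}\|_{W^{1,\infty}}$ or its modulus of continuity. Working in a ball of $C\big([0,T];C(\bar\Omega)\big)$ does not automatically give this, because the self-map property near $t=0$ a priori involves the modulus of continuity of $u^{in}$. The standard fix --- which you gesture at with "smoothing" but do not spell out --- is to expand the divergence-form operator into non-divergence form, $\partial_t u=\gamma(\bar v)\Delta u+2\gamma'(\bar v)\nabla\bar v\cdot\nabla u+u\big(\gamma''(\bar v)|\nabla\bar v|^2+\gamma'(\bar v)(\bar v-\bar u)\big)$, and use the classical maximum principle to get $\|\Phi[\bar u](t)\|_\infty\le\|u^{in}\|_\infty e^{Ct}$ with $C$ controlled by $\|\bar u\|_{C([0,T]\times\bar\Omega)}$; together with the analogous estimate on $\Phi[\bar u_1]-\Phi[\bar u_2]$ this makes both the self-map property and the contraction constant depend only on $\|u^{in}\|_\infty$, which is exactly what the continuation argument at times $t_n\uparrow T_{\mathrm{max}}$ needs. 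With that supplement your sketch is complete and faithful to the cited reference.
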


We next recall the following  lemma \cite[Lemma~2.1]{FWY2015}.

\begin{lemma}\label{lem2}
	There is $\omega_*>0$ depending only on $\Omega$ such that, for any $f\in L^1(\Omega)$ satisfying
	\begin{equation*}
		f\ge 0 \;\;\text{ a.e. in }\; \Omega \;\;\;\text{ and }\;\; \|f\|_1=m\,,
	\end{equation*}
	there holds
	\begin{equation*}
		\mathcal{A}^{-1}[f] \ge m \omega_* \;\;\;\text{ in }\;\; \Omega\,,
	\end{equation*}
recalling that the elliptic operator $\mathcal{A}$ is defined by~\eqref{opA}.
\end{lemma}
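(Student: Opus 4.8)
The plan is to use the representation of $\mathcal{A}^{-1}$ by the Neumann heat semigroup. Let $(e^{t\Delta})_{t\ge0}$ denote the heat semigroup on $L^1(\Omega)$ generated by the Laplacian with homogeneous Neumann boundary conditions, with (smooth, positive) kernel $G$ on $(0,\infty)\times\bar\Omega\times\bar\Omega$, so that $e^{t\Delta}g(x)=\int_\Omega G(t,x,y)g(y)\,dy$, $\int_\Omega G(t,x,y)\,dy=1$ for every $(t,x)\in(0,\infty)\times\bar\Omega$, and $G(t,\cdot,\cdot)\to|\Omega|^{-1}$ uniformly on $\bar\Omega\times\bar\Omega$ as $t\to\infty$ (the last property reflecting the spectral gap of the Neumann Laplacian). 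The resolvent identity $\mathcal{A}^{-1}=\int_0^\infty e^{-t}e^{t\Delta}\,dt$, valid on $L^2(\Omega)$ and extended to $L^1(\Omega)$ by density together with the $L^1$-contractivity of $e^{t\Delta}$, then gives
\[
\mathcal{A}^{-1}[f](x)=\int_0^\infty e^{-t}\int_\Omega G(t,x,y)\,f(y)\,dy\,dt\,,\qquad x\in\Omega\,,
\]
for $f$ as in the statement (interpreted a.e.\ when $f$ is only integrable).

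Since $f\ge0$ and $G>0$, dropping the contribution of $t\in(0,1)$ can only decrease the right-hand side, so it suffices to bound $G$ from below on $[1,\infty)\times\bar\Omega\times\bar\Omega$. The uniform convergence $G(t,\cdot,\cdot)\to|\Omega|^{-1}$ provides $T_*\ge1$ with $G(t,x,y)\ge(2|\Omega|)^{-1}$ whenever $t\ge T_*$ and $(x,y)\in\bar\Omega\times\bar\Omega$, while the continuity and positivity of $G$ on the compact set $[1,T_*]\times\bar\Omega\times\bar\Omega$ give $c_*>0$ with $G\ge c_*$ there. Hence $G(t,x,y)\ge\kappa:=\min\{c_*,(2|\Omega|)^{-1}\}>0$ on $[1,\infty)\times\bar\Omega\times\bar\Omega$, and
\[
\mathcal{A}^{-1}[f](x)\ge\int_1^\infty e^{-t}\,\kappa\int_\Omega f(y)\,dy\,dt=\frac{\kappa}{e}\,\|f\|_1=\frac{\kappa}{e}\,m\,,
\]
so that $\omega_*:=\kappa/e>0$, which depends only on $\Omega$, does the job.

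The proof is essentially a packaging of two classical facts about the Neumann heat kernel on a smooth bounded domain — the strict positivity of $G(t,\cdot,\cdot)$ for each $t>0$ (strong parabolic maximum principle, or parabolic Harnack) and its uniform exponential relaxation to $|\Omega|^{-1}$ as $t\to\infty$ — and I expect the derivation of the uniform-in-$t$ lower bound on $G$ to be the only point requiring genuine care; once it is granted, the rest is a one-line computation. As an alternative avoiding heat kernels, one may observe that $w:=\mathcal{A}^{-1}[f]$ is a nonnegative supersolution of $-\Delta w+w=0$ (as $f\ge0$) satisfying $\int_\Omega w\,dx=m$ (integrate $w-\Delta w=f$ over $\Omega$ using the Neumann condition), and then apply the weak Harnack inequality — available with exponent $1$ since $1<N/(N-2)$ when $N\ge3$, and trivially when $N\le2$ — along a finite Harnack chain covering $\bar\Omega$, the boundary being handled by flattening/even reflection; this again yields a constant depending only on $\Omega$, but the semigroup route is the most economical.
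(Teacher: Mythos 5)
Your argument is correct: the representation $\mathcal{A}^{-1}=\int_0^\infty e^{-t}e^{t\Delta}\,dt$ together with the strict positivity of the Neumann heat kernel on $\bar\Omega\times\bar\Omega$ for $t\ge1$ (via long-time convergence to $|\Omega|^{-1}$ plus compactness on $[1,T_*]$) yields the uniform lower bound with $\omega_*=\kappa/e$. The paper does not prove Lemma~\ref{lem2} but simply cites \cite{FWY2015}, where the argument is precisely this heat-semigroup route, so your proof matches the source the paper relies on.
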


We now fix $\gamma$ satisfying~\eqref{Aad0} and $u^{in}\in W_+^{1,\infty}(\Omega)$ with $m=\|u^{in}\|_1>0$ and denote the corresponding classical solution to~\eqref{ks} defined on $[0,T_{\text{max}})$ provided by Proposition~\ref{prop2.1} by $(u,v)$. We then infer from~\eqref{ks} that $v$ solves
\begin{subequations}\label{veq}
	\begin{align}
		\partial_t v - \gamma(v) \Delta v + v \gamma(v) & = \mathcal{A}^{-1}[u\gamma(v)], &&\qquad (t,x)\in (0,T_{\text{max}})\times \Omega, \label{veqa} \\
		\nabla v\cdot \mathbf{n} & = 0, &&\qquad (t,x)\in (0,T_{\text{max}})\times \partial\Omega, \label{veqb}\\
		v(0) & = v^{in}, &&\qquad x\in\Omega, \label{veqc}
	\end{align}
\end{subequations}
where 
\begin{equation}
	v^{in} = \mathcal{A}^{-1}\big[ u^{in} \big]\,. \label{vin}
\end{equation}
Observe that the properties of $u^{in}$, the elliptic comparison principle, Lemma~\ref{lem2}, \eqref{mcl}, and~\eqref{vin} imply that $v^{in}\in L^\infty(\Omega)$ with $\|v^{in}\|_\infty\leq \|u^{in}\|_\infty$ and
\begin{equation}
	v(t,x)\geq v_*:=m\omega_*>0, \qquad (t,x)\in[0,T_{\mathrm{max}})\times\bar{\Omega}. \label{b00}
\end{equation}

We conclude this section by showing that an $L^\infty$-estimate on $v$ on  $(0,T)$ for some $T>0$ guarantees that $T_{\mathrm{max}}\ge T$.

\begin{proposition}\label{prop2.2}
Under the assumption of Proposition~\ref{prop2.1}, if there is $T>0$ such that
	\begin{equation}
		\mathcal{V}(T) := \sup_{[0,T]\cap [0,T_{\mathrm{max}})}\big\{ \|v(t)\|_\infty \big\} <\infty\,, \label{vvv}
	\end{equation} 
	then $T_{\mathrm{max}}\ge T$ and 
	\begin{equation}
		 \mathcal{U}(T) := \sup_{[0,T]}\big\{ \|u(t)\|_\infty \big\} < \infty\,. \label{uuu}
	\end{equation}
In addition, if~\eqref{vvv} holds true for all $T>0$ and there is $\mathcal{V}_\infty>0$ such that $\mathcal{V}(T)\le\mathcal{V}_\infty$ for all $T>0$, then there is $\mathcal{U}_\infty>0$ such that  $\mathcal{U}(T)\le \mathcal{U}_\infty$ for all $T>0$.
\end{proposition}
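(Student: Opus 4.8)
The plan is to upgrade the a priori $L^\infty$-bound on $v$ to an $L^\infty$-bound on $u$, and to check that the blowup criterion~\eqref{gec} then forces $T_{\mathrm{max}}\ge T$. The starting point is the quasilinear parabolic equation~\eqref{ks1} for $u$, written in divergence form as $\partial_t u = \Delta\big(u\gamma(v)\big)$ with no-flux boundary condition~\eqref{ks3}. Under the assumption~\eqref{vvv}, on the time interval $[0,T]\cap[0,T_{\mathrm{max}})$ the quantity $v$ lies in the compact interval $[v_*,\mathcal{V}(T)]$ by~\eqref{b00} and~\eqref{vvv}, so by~\eqref{Aad0} the motility coefficient satisfies
\begin{equation*}
	0 < \gamma_* := \min_{[v_*,\mathcal{V}(T)]}\gamma \le \gamma(v(t,x)) \le \gamma^* := \max_{[v_*,\mathcal{V}(T)]}\gamma < \infty\,,
\end{equation*}
so that~\eqref{ks1} is \emph{uniformly} parabolic on that time interval. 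I would also record regularity of $v$: since $v=\mathcal{A}^{-1}[u]$ and $\|u(t)\|_1=m$, elliptic regularity gives a bound on $\nabla v$ in $L^p$ for $p<\infty$ from a bound on $u$ in $L^1$, and higher $L^q$-bounds on $u$ upgrade to $W^{1,q}$-bounds on $v$; this will feed the treatment of the drift term $u\gamma'(v)\nabla v$.

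The core is a Moser--Alikakos iteration (or equivalently an $L^p$-bootstrap followed by De Giorgi/Moser) for the scalar equation on $[0,T]$, starting from the mass bound~\eqref{mcl}. First I would test~\eqref{ks1} against $p u^{p-1}$ (for $p\ge 2$, or a regularized power to stay smooth) and integrate by parts, using the expansion $\Delta(u\gamma(v)) = \mathrm{div}\big(\gamma(v)\nabla u + u\gamma'(v)\nabla v\big)$: the principal term yields $-c\,p(p-1)\gamma_*\int u^{p-2}|\nabla u|^2$ which, after writing $|\nabla u^{p/2}|^2$, controls a good dissipation term; the drift term $-\int p(p-1)u^{p-1}\gamma'(v)\nabla v\cdot\nabla u$ is absorbed using Young's inequality at the price of a term $\int u^{p}|\nabla v|^2$, which in turn is controlled via the elliptic estimate $\|\nabla v\|_{L^\infty}$ or $\|\nabla v\|_{L^q}$ in terms of $\|u\|_{L^r}$ for suitable exponents, together with the Gagliardo--Nirenberg inequality interpolating $\|u^{p/2}\|$ between $\|\nabla u^{p/2}\|_{L^2}$ and lower norms. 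This produces a differential inequality of the form $\frac{d}{dt}\|u\|_p^p + c_1\|u^{p/2}\|_{H^1}^2 \le c_2 p^{\kappa}\big(\|u\|_p^p + (\text{lower order})\big)$, which first bounds $\|u(t)\|_p$ on $[0,T]$ for every finite $p$ uniformly in $t\le T$, and then, tracking the $p$-dependence of the constants through the iteration $p_k=2^k$, bounds $\|u(t)\|_\infty$ on $[0,T]$ by a constant depending only on $T$, $m$, $\gamma$ (through $\gamma_*,\gamma^*$ and bounds on $\gamma'$ on $[v_*,\mathcal{V}(T)]$), $\Omega$, and $\|u^{in}\|_\infty$. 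This gives~\eqref{uuu}, and the finiteness of $\mathcal{U}(T)$ contradicts~\eqref{gec} unless $T_{\mathrm{max}}\ge T$; since a classical solution exists on $[0,T_{\mathrm{max}})$ and $\mathcal{U}$ stays bounded, $T_{\mathrm{max}}\ge T$ follows.

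For the second assertion, I would redo the same computation keeping every constant \emph{independent of $T$}. This is possible because, once $\mathcal{V}(T)\le\mathcal{V}_\infty$ for all $T$, the constants $\gamma_*,\gamma^*$ and the bounds on $\gamma',\gamma''$ (restricted to $[v_*,\mathcal{V}_\infty]$) are fixed, and the differential inequality for $\|u\|_p^p$ becomes of the form $y' + \delta y^{1+a} \le c\, y + (\text{lower order})$ with time-independent $\delta,c,a>0$ (the superlinear dissipation coming from Gagliardo--Nirenberg), which by a standard comparison argument (e.g. a Bernoulli-type ODE bound, or the feedback argument used in~\cite{XiaoJiang2022}) yields a bound on $\sup_{t\ge 0}\|u(t)\|_p$ depending only on $\|u^{in}\|_p$, $m$, $\mathcal{V}_\infty$, $\gamma$, $\Omega$. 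Carrying this through the Moser iteration with $p$-dependence of the constants controlled exactly as before then produces a finite $\mathcal{U}_\infty$ with $\mathcal{U}(T)\le\mathcal{U}_\infty$ for all $T$, as claimed.

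The main obstacle is controlling the chemotactic drift term $u\gamma'(v)\nabla v$ in the $L^p$ energy estimate: unlike the minimal Keller--Segel case where $\nabla v$ is estimated in $W^{1,q}$ from the parabolic/elliptic regularity for $v$, here one must exploit the uniform parabolicity (lower bound $\gamma_*$ on the diffusion) to absorb this term, and one must carefully track how the resulting constants depend on $p$ so that the Moser iteration converges; getting the $p$-powers right, and in the second part making sure that the superlinear dissipation term genuinely dominates the linear growth term uniformly in $p$ after the Gagliardo--Nirenberg interpolation, is the delicate bookkeeping. Everything else — the elliptic regularity for $v$, the application of the blowup criterion, and the ODE comparison for the feedback inequality — is routine.
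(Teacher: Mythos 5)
Your proposal takes a genuinely different route from the paper. The paper works almost entirely with the \emph{$v$-equation}~\eqref{veq}, not with the $u$-equation~\eqref{ks1}: since \eqref{vvv} and \eqref{b00} confine $v$ to a compact interval $[v_*,\mathcal{V}(T)]$ on which $\gamma$, $\gamma'$, $\gamma''$ are bounded and $\gamma$ is bounded away from zero, the nonlocal source satisfies $0\le \mathcal{A}^{-1}[u\gamma(v)]\le \gamma^*\mathcal{A}^{-1}[u]=\gamma^* v\le \gamma^*\mathcal{V}(T)$, so~\eqref{veq} is a uniformly parabolic scalar equation for $v$ with $L^\infty$ coefficients and $L^\infty$ right-hand side. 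Parabolic De Giorgi--Nash--Moser then gives a Hölder estimate for $v$, Amann's non-autonomous semigroup theory (applied to the operator $\gamma(v)\Delta$, which is now admissible thanks to the Hölder continuity) upgrades this to $L^\infty_t W^{2,r}_x$ bounds for all $r$, hence $\nabla v\in L^\infty$, and only \emph{then} is the $u$-equation treated as a uniformly parabolic equation with bounded drift, so the final $L^r$--$L^\infty$ bootstrap for $u$ is routine. Your plan is instead to attack the $u$-equation directly with a Moser--Alikakos iteration, bootstrapping $\nabla v$ bounds from $L^r$ bounds on $u$ as you go.

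There is a concrete gap in the way you set this up. The opening regularity claim, ``elliptic regularity gives a bound on $\nabla v$ in $L^p$ for $p<\infty$ from a bound on $u$ in $L^1$,'' is false for $N\ge 2$: from $u\in L^1$ one gets $\nabla v\in L^q$ only for $q<N/(N-1)$, and even adding the hypothesis $v\in L^\infty$ only yields $\nabla v\in L^2$ via $\int|\nabla v|^2=\int v(u-v)\le \mathcal{V}m+\mathcal{V}^2|\Omega|$. Starting from such weak gradient information, your $L^p$ energy inequality for $u$ contains the Keller--Segel-type term $\int u^p|\nabla v|^2$, and Young plus Gagliardo--Nirenberg plus $\|\nabla v\|_{L^q}\lesssim\|u\|_{L^r}$ leads to exactly the critical bootstrap one meets for the minimal parabolic--elliptic Keller--Segel system, which is not closeable from $\|u\|_1=m$ and $\|v\|_\infty\le\mathcal{V}$ alone without further structural input (one can check, for instance, that the integration-by-parts rewriting of $\int u^p|\nabla v|^2$ via $\Delta v=v-u$ produces a copy of the dissipation $\int u^{p-2}|\nabla u|^2$ with a $p^2$-sized coefficient that overwhelms the $\sim p$ coefficient from the good term). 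The crucial ingredient that makes the paper's argument robust --- and that your outline does not reproduce --- is that the $L^\infty$ bound on $v$ renders the \emph{$v$-equation}~\eqref{veq} uniformly parabolic with $L^\infty$ data, so $\nabla v\in L^\infty$ is obtained \emph{before} touching $u$, by linear parabolic regularity applied to $v$ rather than by bootstrapping against $u$. Once $\nabla v\in L^\infty((0,T)\times\Omega)$, your Moser iteration on $u$ does close straightforwardly (with constants controlled by $\gamma_*,\gamma^*,\Gamma_1,\|\nabla v\|_\infty$), and the $T$-independence in the second assertion comes for free because all the coefficients and the right-hand side of \eqref{veq} are $T$-uniformly bounded under $\mathcal{V}(T)\le\mathcal{V}_\infty$. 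I would recommend inserting the intermediate step on~\eqref{veq} rather than trying to push the direct iteration on $u$.
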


\begin{proof}
 We outline the proof here since it relies on a well-established argument already described in previous works \cite{JiLa2021, JLZ2022, XiaoJiang2022}, see also \cite{FuSe2022a, FuSe2022b}.
	
First, arguing as in~\cite[Lemma~3.2]{JiLa2021}, we derive a  H\"older estimate for $v$ in $C^\alpha\big([0,T]\times\bar{\Omega}\big)$, with a possible dependence upon $T$ of both the estimate and the exponent $\alpha\in (0,1)$. Let us mention that, as already noticed in~\cite{JiLa2021}, no monotonicity property of~$\gamma$ is needed here and the proof only requires positive upper and lower bounds on~$\gamma(v)$. 
	
The second step is the derivation of estimates for $v$ in $L^\infty\big((0,T),W^{2,r}(\Omega)\big)$ for $r>1$, still possibly depending on $T$, as well as on~$r$.  The just established H\"older continuity of $v$ enables us to regard $\gamma(v)\Delta$ as a generator of a parabolic evolution operator on $L^r(\Omega)$ for $r\in (1,\infty)$. Then estimates in $L^\infty\big((0,T),W^{2,r}(\Omega)\big)$ of $v$ follows from~\eqref{veq} and applications of the abstract theory for non-autonomous parabolic equations developed in~\cite{Aman1989, Aman1990, Aman1993, Aman1995}; see \cite[Sect.~3.3 and Sect.~4.2]{JiLa2021} for detailed proofs.
	
Finally, we finish the proof of Proposition~\ref{prop2.2} by establish estimates for $u$ in $L^\infty\big((0,T),L^r(\Omega)\big)$, $r\in [1,\infty]$, by a bootstrap argument as done in~\cite{AhYo2019}.
\end{proof}

\section{Global existence and boundedness}\label{sec2}

The purpose of this section is two-fold: we show the global existence and uniform-in-time boundedness of classical solutions to~\eqref{ks}, as well as their continuous dependence on initial data.

According to Proposition~\ref{prop2.2}, it is sufficient to prove that the $L^\infty$-norm of $v$ is bounded on $[0,T_{\mathrm{max}})$ by a time-independent bound to obtain Theorem~\ref{thm1}. In order to illustrate the approach we use and avoid some technicalities, we first provide a proof under the additional assumption that $\gamma$ is non-decreasing on $(0,\infty)$, see Section~\ref{sec2.1}. The proof in the general case is then given in Section~\ref{sec2.2}.
\subsection{$L^\infty$-estimate on $v$: the non-decreasing case} \label{sec2.1}

\begin{lemma}\label{lem2.2}
	Assume $\gamma$ satisfy~\eqref{Aad0} and $\gamma'\ge 0$ on $(0,\infty)$. Then
	\begin{equation}
		\|v(t)\|_\infty \le \|v^{in}\|_\infty, \qquad t\in [0,T_{\mathrm{max}}). \label{b01}
	\end{equation}
\end{lemma}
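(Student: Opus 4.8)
The plan is to use the comparison argument sketched in the introduction, realized through an ODE supersolution. Recall from~\eqref{veqa} and the positivity of $u\gamma(v)$ that the elliptic comparison principle applied to $\mathcal{A}^{-1}$ together with the monotonicity $\gamma'\ge 0$ gives, at each fixed time $t\in[0,T_{\mathrm{max}})$,
\begin{equation*}
	\mathcal{A}^{-1}[u\gamma(v)](t) \le \mathcal{A}^{-1}\big[u\,\gamma(\|v(t)\|_\infty)\big] = \gamma(\|v(t)\|_\infty)\,\mathcal{A}^{-1}[u](t) = \gamma(\|v(t)\|_\infty)\, v(t)\,,
\end{equation*}
so that $v$ is a subsolution of the differential inequality~\eqref{supeq}. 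The difficulty here is that the right-hand side involves $\|v(t)\|_\infty$, which is precisely the quantity we want to control; a direct Gronwall-type estimate would only yield a time-dependent, possibly growing bound because $s\mapsto s\gamma(s)$ is superlinear. The resolution is to feed the (a priori only locally finite) quantity $\|v(t)\|_\infty$ back into an ODE and exploit that the resulting solution is in fact nonincreasing.

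Concretely, first I would fix $T\in(0,T_{\mathrm{max}})$ and work on $[0,T]$, where $\mathcal{V}(T):=\sup_{[0,T]}\|v(t)\|_\infty<\infty$ by continuity of $v$ on $[0,T]\times\bar\Omega$. Let $V=V_T$ solve the scalar ODE
\begin{equation*}
	\frac{dV}{dt} + V\gamma(V) = \gamma\big(\mathcal{V}(T)\big)\,V\,, \qquad t\in[0,T]\,, \qquad V(0)=\|v^{in}\|_\infty\,;
\end{equation*}
this is a well-posed nonautonomous (here even autonomous) ODE with locally Lipschitz right-hand side since $\gamma\in C^3((0,\infty))$ and $V$ stays in a compact subinterval of $(0,\infty)$ (it is bounded below away from $0$, as will be seen). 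Since $V=V(t)$ is spatially constant, $\gamma(V)\Delta V=0$, and using $\gamma(\|v(t)\|_\infty)\le\gamma(\mathcal{V}(T))$ together with $v(t)\le\|v(t)\|_\infty$, the function $V$ is a supersolution of~\eqref{veqa} on $[0,T]\times\Omega$ with $V(0)=\|v^{in}\|_\infty\ge v^{in}$ pointwise. The parabolic comparison principle — legitimate because on $[0,T]$ the coefficient $\gamma(v)$ is bounded above and below by positive constants (lower bound from~\eqref{b00}, upper bound from $\gamma$ continuous and $v$ bounded on $[0,T]$), so~\eqref{veqa} is uniformly parabolic with the no-flux boundary condition~\eqref{veqb} — then yields $v(t,x)\le V(t)$ on $[0,T]\times\bar\Omega$, hence $\mathcal{V}(T)=\sup_{[0,T]}\|v(t)\|_\infty\le\sup_{[0,T]}V(t)$.

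The final and decisive step is to observe that $V$ is nonincreasing. Indeed, at $t=0$ we have $\gamma(V(0))=\gamma(\|v^{in}\|_\infty)$, and since $\|v^{in}\|_\infty=V(0)\ge\|v(0)\|_\infty$ — wait, more carefully: since $\mathcal{V}(T)=\sup_{[0,T]}\|v(t)\|_\infty$ and we have just shown $\|v(t)\|_\infty\le V(t)$, we get $\mathcal{V}(T)\le\sup_{[0,T]}V(t)$; I would therefore argue directly that $dV/dt\le 0$. From the ODE, $dV/dt = \big(\gamma(\mathcal{V}(T))-\gamma(V)\big)V$, so $V$ is nonincreasing on the (sub)interval where $V\ge\mathcal{V}(T)$; since $V(0)=\|v^{in}\|_\infty\le\mathcal{V}(T)$ is not automatic, I would instead bound $V$ by the constant supersolution $\mathcal{V}(T)$ itself only after closing the argument — so let me reorganize: take the constant function $W\equiv\mathcal{V}(T)$, note $dW/dt+W\gamma(W)=\mathcal{V}(T)\gamma(\mathcal{V}(T))\ge\gamma(\mathcal{V}(T))W$? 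No. The clean route, as in the introduction, is: $V$ solves $dV/dt=(\gamma(\mathcal{V}(T))-\gamma(V))V$ and $V(0)=\|v^{in}\|_\infty$; if $\|v^{in}\|_\infty\ge\mathcal{V}(T)$ then by monotonicity $dV/dt\le 0$ as long as $V\ge\mathcal{V}(T)$, so $V$ decreases and stays $\ge\mathcal{V}(T)$ is impossible unless constant — in any case $V(t)\le\|v^{in}\|_\infty$ for all $t$, and combining with $\|v(t)\|_\infty\le V(t)$ gives $\mathcal{V}(T)\le\|v^{in}\|_\infty$, which then also forces $\gamma(\mathcal{V}(T))\le\gamma(\|v^{in}\|_\infty)$ and $dV/dt\le 0$ throughout, confirming $V(t)\le V(0)=\|v^{in}\|_\infty$. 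Since the bound $\|v(t)\|_\infty\le\|v^{in}\|_\infty$ thus obtained on each $[0,T]$, $T<T_{\mathrm{max}}$, is uniform in $T$, it holds on all of $[0,T_{\mathrm{max}})$, which is~\eqref{b01}. The main obstacle is this mild circularity between $\mathcal{V}(T)$ and $V$; it is resolved by the a priori finiteness of $\mathcal{V}(T)$ on $[0,T]$ (from local existence) plus the sign of $dV/dt$, and care must be taken to justify the parabolic comparison principle on each finite interval where uniform parabolicity is available.
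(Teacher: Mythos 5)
There is a genuine gap, located precisely at the step you yourself flagged as troublesome. The root cause is your choice of the \emph{constant} coefficient $\gamma(\mathcal{V}(T))$ in the comparison ODE, whereas the paper uses the \emph{time-dependent} coefficient $\gamma(\|v(t)\|_\infty)$. With the paper's choice, the argument closes in one line: once the parabolic comparison gives $\|v(t)\|_\infty\le V(t)$, the monotonicity of $\gamma$ yields $\gamma(\|v(t)\|_\infty)\le\gamma(V(t))$ \emph{pointwise in $t$}, whence $dV/dt=\big(\gamma(\|v(t)\|_\infty)-\gamma(V(t))\big)V(t)\le 0$ for all $t$ and so $V(t)\le V(0)=\|v^{in}\|_\infty$. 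No circularity arises.

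With your constant coefficient, the ODE reads $dV/dt=\big(\gamma(\mathcal{V}(T))-\gamma(V)\big)V$ with $V(0)=\|v^{in}\|_\infty$. Here $\mathcal{V}(T)$ is an equilibrium, and the sign of $dV/dt$ near $t=0$ is that of $\gamma(\mathcal{V}(T))-\gamma(\|v^{in}\|_\infty)$. If $\|v^{in}\|_\infty<\mathcal{V}(T)$ --- which you cannot rule out a priori, since $\mathcal{V}(T)=\sup_{[0,T]}\|v(t)\|_\infty\ge\|v(0)\|_\infty=\|v^{in}\|_\infty$ and the inequality could in principle be strict --- then $V$ is \emph{nondecreasing} and exceeds $\|v^{in}\|_\infty$ for $t>0$. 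Your assertion ``in any case $V(t)\le\|v^{in}\|_\infty$ for all $t$'' is therefore false in that regime, and the subsequent deduction $\mathcal{V}(T)\le\|v^{in}\|_\infty$ is derived only under the hypothesis ``if $\|v^{in}\|_\infty\ge\mathcal{V}(T)$,'' i.e.\ by assuming what you wish to prove. The closing remark that the circularity ``is resolved by the a priori finiteness of $\mathcal{V}(T)$ plus the sign of $dV/dt$'' does not hold up, precisely because the sign of $dV/dt$ is not a priori the favorable one.

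For the record, the constant-coefficient route can be repaired, but it requires an extra contradiction step you did not write down: if $\|v^{in}\|_\infty<\mathcal{V}(T)$, then $V$ is nondecreasing, never reaches the equilibrium $\mathcal{V}(T)$ in finite time by ODE uniqueness, so $\sup_{[0,T]}V=V(T)<\mathcal{V}(T)$; combined with $\mathcal{V}(T)\le\sup_{[0,T]}V$ this yields $\mathcal{V}(T)<\mathcal{V}(T)$, a contradiction, forcing $\|v^{in}\|_\infty\ge\mathcal{V}(T)$ after all. This is more work than the paper's one-line argument with a time-dependent coefficient, which you should adopt instead.
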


\begin{proof}
	Owing to~\eqref{ks2}, the monotonicity of $\gamma$, and the non-negativity of $u$, there holds 
	\begin{equation*}
		u\gamma(v) \le \gamma(\|v\|_\infty) \mathcal{A}[v] \;\;\text{ in }\;\; (0,T_{\text{max}})\times \Omega.
	\end{equation*} 
	Consequently, the elliptic comparison implies that 
	\begin{equation*}
		\mathcal{A}^{-1}[u \gamma(v)] \le \gamma(\|v\|_\infty) v \;\;\text{ in }\;\; (0,T_{\text{max}})\times\Omega,
	\end{equation*}
	and it readily follows from~\eqref{veq} that $v$ satisfies
	\begin{subequations}\label{b02}
		\begin{align}
			\partial_t v - \gamma(v) \Delta v + v \gamma(v) & \le  \gamma(\|v\|_\infty) v\,, &&\qquad (t,x)\in (0,T_{\text{max}})\times \Omega\,, \label{b02a} \\
			\nabla v\cdot \mathbf{n} & = 0, &&\qquad (t,x)\in (0,T_{\text{max}})\times \partial\Omega\,, \label{b02b}\\
			v(0) & = v^{in}\,, &&\qquad x\in\Omega\,. \label{b02c}
		\end{align}
	\end{subequations}
Introducing the solution $V$ to the ordinary differential equation
\begin{subequations}\label{b03}
	\begin{align}
		\frac{dV}{dt} + V\gamma(V) & = \gamma(\|v\|_\infty) V\,, \qquad t\in (0,T_{\text{max}})\,, \label{b03a} \\
		V(0) & = \|v^{in}\|_\infty\,, \label{b03b}
	\end{align}
\end{subequations}
we infer from~\eqref{b02}, \eqref{b03}, and the parabolic comparison principle that 
\begin{equation}
	v(t,x)\le V(t), \qquad (t,x)\in (0,T_{\text{max}})\times \bar{\Omega}. \label{b04}
\end{equation}
In particular, for $t\in (0,T_{\text{max}})$, there holds $\|v(t)\|_\infty\le V(t)$ and we deduce from~\eqref{b03a}, the non-negativity of $V$, and the monotonicity of $\gamma$ that
\begin{equation*}
	\frac{dV}{dt}(t) + V(t) \gamma(V(t)) \le \gamma(V(t)) V(t), \qquad t\in (0,T_{\text{max}}).
\end{equation*}
Hence, $dV/dt\le 0$ in $(0,T_{\text{max}})$, from which~\eqref{b01} follows after using~\eqref{b03b} and~\eqref{b04}.
\end{proof}

\subsection{$L^\infty$-estimate on $v$: the general case} \label{sec2.2}

When $\gamma$ is not monotone but becomes unbounded near infinity, the previous argument no longer works and requires to be suitably adapted. To this end, we begin with the following auxiliary result.

\begin{lemma}\label{lem2.3}
	Under the assumptions~\eqref{Aad0} and~\eqref{A0}, there is $s_*\ge  \|v^{in}\|_\infty$ such that 
	\begin{equation}
		\gamma(s_*) = \max_{s\in [v_*,s_*]} \{\gamma(s)\}, \label{b05}
	\end{equation}
recalling that the lower bound $v_*$ on $v$ is defined in~\eqref{b00}.
\end{lemma}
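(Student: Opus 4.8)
The plan is to exhibit $s_*$ directly as a maximiser of $\gamma$ over a conveniently chosen interval whose right endpoint lies strictly beyond $\|v^{in}\|_\infty$. Let me first note that the interval $[v_*,s_*]$ appearing in~\eqref{b05} is meaningful: since $v(0)=v^{in}$, estimate~\eqref{b00} gives $v^{in}(x)\ge v_*$ for all $x\in\bar\Omega$, hence $\|v^{in}\|_\infty\ge v_*>0$. The only inputs I would use are the continuity of $\gamma$ and the growth condition~\eqref{A0}; the regularity $\gamma\in C^3$ plays no role here.

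I would set $M_0:=\max_{s\in[v_*,\|v^{in}\|_\infty]}\gamma(s)$, which is finite because $\gamma$ is continuous on the compact interval $[v_*,\|v^{in}\|_\infty]$. By~\eqref{A0} there is a sequence $(\sigma_n)_n$ with $\sigma_n\to\infty$ and $\gamma(\sigma_n)\to\infty$; in particular, one may pick $r_1>\|v^{in}\|_\infty$ with $\gamma(r_1)>M_0$. Since $\gamma$ is continuous on the compact interval $[v_*,r_1]$, it attains its maximum there at some point $s_*\in[v_*,r_1]$, and $\gamma(s_*)\ge\gamma(r_1)>M_0$. Consequently $s_*\notin[v_*,\|v^{in}\|_\infty]$ by definition of $M_0$, so that $s_*>\|v^{in}\|_\infty$, which is the first assertion of the lemma.

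Then it remains to check~\eqref{b05}. Because $s_*\le r_1$, we have $[v_*,s_*]\subseteq[v_*,r_1]$, whence
\[
	\gamma(s_*)=\max_{s\in[v_*,r_1]}\gamma(s)\ge\max_{s\in[v_*,s_*]}\gamma(s)\ge\gamma(s_*),
\]
the last inequality following from $s_*\in[v_*,s_*]$. Thus the three quantities coincide and~\eqref{b05} holds.

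The argument is elementary and I do not foresee any real obstacle beyond identifying the right object. The point worth stressing is that $s_*$ is chosen so that $\gamma$ restricted to $[v_*,s_*]$ is dominated by its value at the endpoint $s_*$, while $s_*$ still majorises $\|v^{in}\|_\infty$. This is precisely the property that will be needed in Section~\ref{sec2.2}: it allows one to handle levels of $v$ below $s_*$ with a bounded‑motility‑type comparison (using $\gamma(s_*)$ as a uniform upper bound), confining the genuinely unbounded behaviour of $\gamma$ to the region $\{v\ge s_*\}$, where the splitting of $\gamma$ into monotone pieces and the monotonicity trick from~\cite{JiLa2021} take over.
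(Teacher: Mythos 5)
Your proof is correct and achieves the same conclusion, but by a more streamlined route than the paper. The paper builds two auxiliary non-decreasing sequences $(M_j)_{j\ge1}$ and $(s_j)_{j\ge1}$ over the expanding intervals $[v_*,j\|v^{in}\|_\infty]$, argues that both sequences tend to infinity thanks to~\eqref{A0}, and then selects $s_*:=s_{j_0}$ where $j_0$ is the first index with $s_j\ge\|v^{in}\|_\infty$. You instead pick a single point $r_1>\|v^{in}\|_\infty$ with $\gamma(r_1)>M_0:=\max_{[v_*,\|v^{in}\|_\infty]}\gamma$, which is possible because $\limsup_{s\to\infty}\gamma(s)=\infty$, and take $s_*$ to be any maximizer of $\gamma$ over the compact interval $[v_*,r_1]$; the inequality $\gamma(s_*)\ge\gamma(r_1)>M_0$ then forces $s_*>\|v^{in}\|_\infty$, and the sandwich $\gamma(s_*)=\max_{[v_*,r_1]}\gamma\ge\max_{[v_*,s_*]}\gamma\ge\gamma(s_*)$ yields~\eqref{b05}. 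Both arguments rely only on continuity of $\gamma$ and~\eqref{A0}, so what your version buys is brevity: no sequences, no monotonicity verification, no passage to the limit, and no extremal choice (taking the supremum of maximizers) that the paper's $s_j$ requires. One minor clarification worth keeping in mind is that your $s_*$ is not uniquely determined --- any maximizer over $[v_*,r_1]$ works --- whereas the paper's definition via a supremum pins down a canonical choice; this is immaterial for the lemma and its later use.
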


We may obviously choose $s_*=\|v^{in}\|_\infty$ when $\gamma$ is non-decreasing.

\begin{proof}
	Let $j\ge 1$ be a positive integer and set
	\begin{align*}
		M_j & := \max\limits_{s\in \big[ v_*, j\|v^{in}\|_\infty \big]}\{\gamma(s)\}, \\
		s_j & := \sup\left\{ s\in \big[ v_*, j\|v^{in}\|_\infty \big]\ :\ \gamma(s)=M_j \right\},
	\end{align*}
	so that 
	\begin{equation*}
		\gamma(s_j) = M_j = \max\limits_{s\in \big[v_*, s_j \big]}\{\gamma(s)\}.
	\end{equation*} 
Then $(M_j)_{j\ge 1}$ and $(s_j)_{j\ge 1}$ are non-decreasing sequences of positive real numbers and the unboundedness~\eqref{A0} of $\gamma$ at infinity guarantees that
\begin{equation*}
	\lim\limits_{j\to\infty} M_j = \infty \;\;\text{ and }\;\; \lim\limits_{j\to \infty} s_j = \infty.
\end{equation*}
Consequently,
	\begin{equation*}
		j_0 := \inf\{ j\ge 1\ :\ s_j\geq \|v^{in}\|_\infty\} < \infty.
	\end{equation*}
	Setting $s_* := s_{j_0}$, it is clear that $s_*\geq \|v^{in}\|_\infty$ with
	\begin{equation*}
		\gamma(s_*) = \gamma(s_{j_0}) = M_{j_{s_0}} = \max\limits_{s\in[ v_*,s_{j_0}]}\{\gamma(s)\} = \max\limits_{s\in[ v_*,s_*]}\{\gamma(s)\},
	\end{equation*}
and the proof is complete.
\end{proof}

We next define
\begin{equation}
	\gamma_i'(s):=\begin{cases}
		0, & \qquad s\in[v_*,s_*),\\
		(\gamma'(s))_+ = \max\big\{ \gamma'(s) , 0 \big\}, & \qquad s\geq s_*,
	\end{cases} \label{b06}
\end{equation}
and
\begin{equation}
	\gamma_d'(s):=\begin{cases}
		0,& \qquad s\in[v_*,s_*),\\
		-(\gamma'(s))_- = \min\big\{ \gamma'(s) , 0 \big\},& \qquad s\geq s_*,
	\end{cases} \label{b07}
\end{equation}
with $\gamma_i(s_*)=\gamma_d(s_*)=0$ and notice that
\begin{equation}
	\gamma_i\ge 0 \ge \gamma_d \;\;\text{ on }\;\; [v_*,\infty) \label{b08}
\end{equation}
and
\begin{equation}
	\gamma(s)=\gamma(s_*)+\gamma_i(s)+\gamma_d(s), \qquad s\in [s_*,\infty). \label{b18}
\end{equation}
In addition,
\begin{equation}
	\gamma_i(s) = \gamma_d(s) = 0, \qquad s\in [v_*,s_*]. \label{b09}
\end{equation}
We also set
\begin{equation}
	\Gamma_d(s):=\int_{s_*}^s\gamma_d(\sigma)d\sigma,\qquad s\in[v_*,\infty), \label{b10}
\end{equation}
and deduce from~\eqref{b08} and~\eqref{b09} that
\begin{equation}
	\begin{split}
	& \Gamma_d(s) = 0, \qquad s\in [v_*,s_*], \\
	0 \ge & \Gamma_d(s) \ge (s-s_*) \gamma_d(s) \ge s \gamma_d(s), \qquad s\ge s_*.
	\end{split} \label{b11}
\end{equation}
With this notation, we are in a position to derive an upper bound on $u\gamma(v)$. 

\begin{lemma} \label{lem2.4}
	There holds
	\begin{equation*}
		u\gamma(v)\leq u \big[ \gamma(s_*) + \gamma_i(\|v\|_\infty) \big] + \mathcal{A}[\Gamma_d(v)] \;\;\text{ in }\;\; (0,T_{\mathrm{max}}) \times \Omega.
	\end{equation*}
\end{lemma}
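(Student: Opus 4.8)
The plan is to start from the identity~\eqref{veqa}, namely $\partial_t v - \gamma(v)\Delta v + v\gamma(v) = \mathcal{A}^{-1}[u\gamma(v)]$, and to produce a pointwise upper bound on $u\gamma(v)$ by decomposing $\gamma$ according to Lemma~\ref{lem2.3} and~\eqref{b18}. Since $v\ge v_*$ by~\eqref{b00}, we distinguish the region $\{v<s_*\}$ from the region $\{v\ge s_*\}$. On $\{v\le s_*\}$ we have $\gamma(v)\le\gamma(s_*)$ by~\eqref{b05}, while $\gamma_i(\|v\|_\infty)\ge 0$ and $\Gamma_d(v)=0$ there by~\eqref{b09}, so the claimed inequality $u\gamma(v)\le u[\gamma(s_*)+\gamma_i(\|v\|_\infty)] + \mathcal{A}[\Gamma_d(v)]$ will need to be checked to survive the application of $\mathcal{A}=\mathrm{id}-\Delta$ to $\Gamma_d(v)$; this is where the main subtlety lies, so I postpone it.

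On the region $\{v\ge s_*\}$, I would use~\eqref{b18} to write $\gamma(v) = \gamma(s_*) + \gamma_i(v) + \gamma_d(v)$. The increasing part is controlled by monotonicity of $\gamma_i$ on $[s_*,\infty)$ (it is non-decreasing there since $\gamma_i' = (\gamma')_+\ge 0$), giving $\gamma_i(v)\le\gamma_i(\|v\|_\infty)$; combined with $u\ge 0$ this yields $u[\gamma(s_*)+\gamma_i(v)]\le u[\gamma(s_*)+\gamma_i(\|v\|_\infty)]$. It remains to absorb the term $u\gamma_d(v)$, and this is precisely the point of introducing $\Gamma_d$. I would compute $\mathcal{A}[\Gamma_d(v)] = \Gamma_d(v) - \Delta\Gamma_d(v) = \Gamma_d(v) - \gamma_d(v)\Delta v - \gamma_d'(v)|\nabla v|^2$. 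Using~\eqref{ks2} in the form $\Delta v = v - u$ (so $-\gamma_d(v)\Delta v = \gamma_d(v)(u-v) = u\gamma_d(v) - v\gamma_d(v)$), this becomes $\mathcal{A}[\Gamma_d(v)] = u\gamma_d(v) + \Gamma_d(v) - v\gamma_d(v) - \gamma_d'(v)|\nabla v|^2$. Now $\gamma_d'(v)\le 0$, so $-\gamma_d'(v)|\nabla v|^2\ge 0$, and~\eqref{b11} gives $\Gamma_d(v) - v\gamma_d(v)\ge 0$ on $\{v\ge s_*\}$ (and $=0$ on $\{v\le s_*\}$, where also $\gamma_d(v)=0$). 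Hence $\mathcal{A}[\Gamma_d(v)]\ge u\gamma_d(v)$ throughout $(0,T_{\mathrm{max}})\times\Omega$, which is exactly the term we needed to add.

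Putting the two regions together: on $\{v\le s_*\}$ we get $u\gamma(v)\le u\gamma(s_*)\le u[\gamma(s_*)+\gamma_i(\|v\|_\infty)]\le u[\gamma(s_*)+\gamma_i(\|v\|_\infty)] + \mathcal{A}[\Gamma_d(v)]$, the last step because $\mathcal{A}[\Gamma_d(v)]\ge u\gamma_d(v) = 0$ there; on $\{v\ge s_*\}$ we get $u\gamma(v) = u\gamma(s_*) + u\gamma_i(v) + u\gamma_d(v)\le u[\gamma(s_*)+\gamma_i(\|v\|_\infty)] + u\gamma_d(v)\le u[\gamma(s_*)+\gamma_i(\|v\|_\infty)] + \mathcal{A}[\Gamma_d(v)]$. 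This establishes the lemma. The step I expect to require the most care is verifying the chain rule for $\mathcal{A}[\Gamma_d(v)]$ at the regularity available (here $v\in C^{1,2}$ and $\gamma\in C^3$, and $\gamma_d$ is only Lipschitz because of the truncation at $s_*$ and the $(\cdot)_-$ in~\eqref{b07}), and in particular justifying that the identity $\Delta\Gamma_d(v) = \gamma_d(v)\Delta v + \gamma_d'(v)|\nabla v|^2$ holds in the pointwise sense needed for the comparison argument — this may require either observing that $\gamma_d\in C^2$ away from $s_*$ together with a matching argument across $\{v=s_*\}$, or interpreting the inequality in a distributional/a.e. sense that is still compatible with the parabolic comparison principle used downstream. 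A secondary point to be careful about is the sign and monotonicity bookkeeping for $\gamma_i$ and $\gamma_d$ near $s_*$, where both vanish by~\eqref{b09}.
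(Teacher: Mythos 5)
Your proof is correct and follows essentially the same path as the paper's: decompose $\gamma$ via~\eqref{b18}, bound the increasing part by monotonicity, and absorb $u\gamma_d(v)$ into $\mathcal{A}[\Gamma_d(v)]$ using $u=\mathcal{A}[v]$, the chain rule, $\gamma_d'\le 0$, and~\eqref{b11}. The only organizational difference is that the paper sidesteps the chain-rule regularity concern you flag for the region $\{v<s_*\}$ by observing that $\Gamma_d(v)$ vanishes identically on a small ball there (so $\mathcal{A}[\Gamma_d(v)]=0$ directly), rather than invoking the pointwise identity for $\Delta\Gamma_d(v)$ near the kink of $\gamma_d$ at $s_*$.
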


\begin{proof} Let $t\in (0,T_{\mathrm{max}})$. We first consider the case where $\|v(t)\|_\infty\ge s_*$. Then, for $x\in\Omega$, either $v(t,x)\geq s_*$ and it follows from~\eqref{ks2}, \eqref{b18}, the monotonicity of $\gamma_i$ and $\gamma_d$, and the non-negativity of $u$ that
	\begin{align*}
		u(t,x) \gamma(v(t,x)) = & u(t,x) \gamma(s_*) + u(t,x) \gamma_i(v(t,x)) + u(t,x) \gamma_d(v(t,x))\\
		\leq & u(t,x) \gamma(s_*) + u(t,x) \gamma_i(\|v(t)\|_\infty) + \gamma_d(v(t,x)) (v-\Delta v)(t,x)\\
		= & u(t,x) \gamma(s_*) + u(t,x) \gamma_i(\|v(t)\|_\infty) - \mathrm{div}(\gamma_d(v)\nabla v)(t,x) \\
		& \qquad + v(t,x) \gamma_d(v(t,x)) + \gamma'_d(v(t,x)) |\nabla v(t,x)|^2 \\
		\le & u(t,x) \gamma(s_*) + u(t,x) \gamma_i(\|v(t)\|_\infty) - \Delta\Gamma_d(v)(t,x) + v(t,x) \gamma_d(v(t,x)).
	\end{align*}
	We next use~\eqref{b11} to arrive at
	\begin{equation*}
		u(t,x) \gamma(v(t,x))\leq u(t,x) \big[ \gamma(s_*) + \gamma_i(\|v(t)\|_\infty) \big] + \mathcal{A}[\Gamma_d(v)](t,x).
	\end{equation*}
	Or $v_*\le v(t,x)<s_*$, and there is $r>0$ such $v_*\leq v(t,y)<s_*$ for all $y\in B_r(x)\subset\Omega$ by the continuity of $v$ and ~\eqref{b00}. Therefore, $\Gamma_d(v(t))\equiv 0$ in $B_r(x)$ by~\eqref{b11} and $\mathcal{A}[\Gamma_d(v)](t,x)=0$. Consequently, recalling~\eqref{b05},
	\begin{equation*}
		u(t,x) \gamma(v(t,x)) \leq u(t,x) \gamma(s_*) \leq u(t,x) \big[ \gamma(s_*) + \gamma_i(\|v(t)\|_\infty) \big] + \mathcal{A}[\Gamma_d(v)](t,x).
	\end{equation*}	
	
	We next argue as above to conclude that, when $\|v(t)\|_\infty<s_*$, the inequality
	\begin{equation*}
		u(t,x) \gamma(v(t,x)) \leq u(t,x) \gamma(s_*) \leq u(t,x) \big[ \gamma(s_*) + \gamma_i(\|v(t)\|_\infty) \big] + \mathcal{A}[\Gamma_d(v)](t,x)
	\end{equation*}	
	also holds true for all $x\in\Omega$. This completes the proof.
\end{proof}

After this preparation, we are ready to establish an $L^\infty$-estimate on $v$.

\begin{proposition} \label{prop2.5}
	Recalling that $s_*$ is defined in Lemma~\ref{lem2.3}, there holds
	\begin{equation}
		\|v(t)\|_\infty \le s_*, \qquad t\in [0,T_{\text{max}}). \label{b12}
	\end{equation}
\end{proposition}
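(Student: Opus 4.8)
The plan is to combine the pointwise upper bound on $u\gamma(v)$ from Lemma~\ref{lem2.4} with a comparison argument for the equation~\eqref{veqa} satisfied by $v$, mimicking the structure of the proof of Lemma~\ref{lem2.2} but now for the shifted quantity that accounts for the decreasing part $\gamma_d$. First I would insert the estimate of Lemma~\ref{lem2.4} into~\eqref{veqa}, using that $\mathcal{A}^{-1}$ is order-preserving (elliptic comparison principle) and that $\mathcal{A}^{-1}[\mathcal{A}[\Gamma_d(v)]] = \Gamma_d(v)$, to obtain
\begin{equation*}
	\partial_t v - \gamma(v)\Delta v + v\gamma(v) \le \big[\gamma(s_*) + \gamma_i(\|v\|_\infty)\big]\, \mathcal{A}^{-1}[u] + \Gamma_d(v) = \big[\gamma(s_*) + \gamma_i(\|v\|_\infty)\big] v + \Gamma_d(v)
\end{equation*}
on $(0,T_{\mathrm{max}})\times\Omega$, recalling $\mathcal{A}^{-1}[u]=v$. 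Since $\Gamma_d(v)\le 0$ by~\eqref{b11}, we may simply drop that term to get the differential inequality
\begin{equation*}
	\partial_t v - \gamma(v)\Delta v + v\gamma(v) \le \big[\gamma(s_*) + \gamma_i(\|v\|_\infty)\big] v \;\;\text{ in }\;\; (0,T_{\mathrm{max}})\times\Omega,
\end{equation*}
together with the no-flux boundary condition~\eqref{veqb} and the initial condition $v(0)=v^{in}$.

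Next I would introduce the ODE comparison function $V$ solving
\begin{equation*}
	\frac{dV}{dt} + V\gamma(V) = \big[\gamma(s_*) + \gamma_i(\|v\|_\infty)\big] V, \qquad t\in(0,T_{\mathrm{max}}), \qquad V(0) = s_*,
\end{equation*}
which is well-defined and positive since $V(0)=s_*\ge \|v^{in}\|_\infty \ge v_*>0$ by Lemma~\ref{lem2.3} and~\eqref{b00}, and the right-hand side is locally Lipschitz in $V$ on $(0,\infty)$ (using~\eqref{Aad0}). As a spatially constant function, $V$ is a supersolution to the above inequality with $V(0)=s_*\ge \|v^{in}\|_\infty\ge v^{in}$, so the parabolic comparison principle yields $v(t,x)\le V(t)$ on $[0,T_{\mathrm{max}})\times\bar\Omega$, hence $\|v(t)\|_\infty \le V(t)$.

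The final and crucial step is to show $V(t)\le s_*$ for all $t\in[0,T_{\mathrm{max}})$, for which I would use the defining property~\eqref{b05} of $s_*$ from Lemma~\ref{lem2.3}. Suppose, for contradiction, that $V$ exceeds $s_*$ somewhere; let $t_1$ be the first time $V(t_1)=s_*$ with $V$ increasing through it (such a time exists by continuity since $V(0)=s_*$ and $V$ is $C^1$; one handles the boundary case $V(0)=s_*$ by examining $dV/dt(0)$). For $t$ slightly larger than $t_1$ we have $V(t)>s_*$ and also, from $\|v(t)\|_\infty\le V(t)$ on $[0,t]$ plus the already-known bound — here is the subtle point — we need $\gamma_i(\|v(t)\|_\infty)$ to be controlled. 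Actually the cleaner route, which I expect is the one intended, is to run the argument by contradiction on the supremum: set $\sigma := \sup\{t\in[0,T_{\mathrm{max}}) : \|v(s)\|_\infty \le s_* \text{ for all } s\in[0,t]\}$ and suppose $\sigma<T_{\mathrm{max}}$. On $[0,\sigma]$ one has $\|v\|_\infty\le s_*$, so by~\eqref{b09} $\gamma_i(\|v(t)\|_\infty)=0$ for $t\in[0,\sigma]$, whence the ODE reduces to $dV/dt = \big(\gamma(s_*) - \gamma(V)\big) V$ on $[0,\sigma]$; since $V(0)=s_*$ and $\gamma(s_*)=\max_{[v_*,s_*]}\gamma$, as long as $V\le s_*$ (equivalently $v_*\le V\le s_*$, the lower bound $v_* \le V$ following because $dV/dt>0$ near any point where $V$ would try to drop below using... in fact $V$ stays $\le s_*$ on this interval by Gronwall-type reasoning since $dV/dt\le 0$ whenever $\gamma(V)\ge\gamma(s_*)$, and $\gamma(V)\le\gamma(s_*)$ forces $dV/dt\ge 0$ only while $V\le s_*$), one checks $V$ cannot cross $s_*$. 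Therefore $V(t)\le s_*$ on $[0,\sigma]$, giving $\|v(t)\|_\infty\le s_*$ there with no possibility of $\|v\|_\infty$ reaching $s_*$ from above at $\sigma^+$ — a contradiction with the definition of $\sigma$ unless $\sigma=T_{\mathrm{max}}$. Hence $\|v(t)\|_\infty\le s_*$ on all of $[0,T_{\mathrm{max}})$, which is~\eqref{b12}.

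The main obstacle is the self-referential nature of the estimate: the comparison ODE for $V$ contains $\gamma_i(\|v\|_\infty)$ on its right-hand side, and $\|v\|_\infty$ is in turn bounded by $V$, so one cannot decouple the two a priori. The bootstrap/continuity argument on the largest interval where $\|v\|_\infty\le s_*$ resolves this precisely because on that interval $\gamma_i(\|v\|_\infty)$ vanishes (by the choice of $s_*$ in Lemma~\ref{lem2.3} and~\eqref{b09}), which kills the superlinear feedback term and lets the monotonicity $\gamma(s_*)=\max_{[v_*,s_*]}\gamma$ do the work, exactly as in the non-decreasing case of Lemma~\ref{lem2.2}. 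Care is needed to justify that the interval is in fact relatively closed and that the strict inequality propagates, and to handle the lower bound $V\ge v_*$ so that $\gamma(V)$ stays within the range where~\eqref{b05} applies; these are routine once the structure above is in place.
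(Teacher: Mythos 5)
Your setup through the comparison $\|v(t)\|_\infty\le V(t)$ is sound and essentially identical to the paper's (the only difference being that you drop $\Gamma_d(v)\le 0$ from the parabolic inequality and hence from the ODE for $V$; this is a harmless simplification, since the needed cancellation $-V\gamma_d(V)$ in the step below already comes from the $-V\gamma(V)$ term on the left). The gap is in the final step, where you try to establish $V\le s_*$ by a continuation argument on $\sigma := \sup\{t : \|v(s)\|_\infty\le s_*\text{ on }[0,t]\}$. On $[0,\sigma]$ you correctly observe that $\gamma_i(\|v\|_\infty)=0$ and hence $V\equiv s_*$ by ODE uniqueness, which gives $\|v\|_\infty\le V=s_*$ on $[0,\sigma]$. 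But this is exactly the information used to define $\sigma$ --- it yields nothing new --- and your assertion that there is ``no possibility of $\|v\|_\infty$ reaching $s_*$ from above at $\sigma^+$'' is unjustified: for $t>\sigma$ the coefficient $\gamma_i(\|v(t)\|_\infty)$ in the ODE for $V$ is no longer controlled, so $V$ may increase, and the comparison $\|v\|_\infty\le V$ then allows $\|v\|_\infty$ to exceed $s_*$. This is precisely the self-referential loop you flag at the end, and the bootstrap as written does not break it.

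The missing idea, which is the crux of the paper's argument, is to close the loop by monotonicity of $\gamma_i$: since $\|v(t)\|_\infty\le V(t)$ on all of $[0,T_{\mathrm{max}})$ and $\gamma_i$ is non-decreasing, one has $\gamma_i(\|v(t)\|_\infty)\le \gamma_i(V(t))$, which turns the non-autonomous ODE for $V$ into a \emph{self-contained} differential inequality
\begin{equation*}
 \frac{dV}{dt} + V\gamma(V) \le \big[\gamma(s_*)+\gamma_i(V)\big]V\,,
\end{equation*}
so that (using $\gamma=\gamma(s_*)+\gamma_i+\gamma_d$ on $[s_*,\infty)$) $\frac{dV}{dt}\le -V\gamma_d(V)=:H(V)$ whenever $V\ge s_*$, with $H(s_*)=0$ and $H'(s)=-\gamma_d(s)-s\gamma_d'(s)\ge 0$. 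A Gronwall argument applied to $(V-s_*)_+$ --- using the a priori exponential bound $V\le\mathcal{V}_T$ on $[0,T]$ to get a local Lipschitz constant for $H$ --- then forces $(V-s_*)_+\equiv 0$, i.e.\ $V\le s_*$, since $(V(0)-s_*)_+=0$. This is what the paper does (with $G(s)=\Gamma_d(s)-s\gamma_d(s)$ in place of $H$ because it keeps $\Gamma_d$). Replacing your continuation step by this monotonicity-plus-Gronwall argument completes the proof.
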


\begin{proof}
It readily follows from Lemma~\ref{lem2.4} and the elliptic comparison principle that
\begin{equation*}
	\mathcal{A}^{-1}[u\gamma(v)] \leq \big[ \gamma(s_*) + \gamma_i(\|v\|_\infty) \big] v + \Gamma_d(v) \;\;\text{ in }\;\; (0,T_{\text{max}})\times\Omega.
\end{equation*}
Consequently, recalling~\eqref{veq}, we realize that $v$ satisfies
\begin{subequations}\label{b13}
	\begin{align}
		\partial_t v - \gamma(v) \Delta v + v \gamma(v) & \le \big[ \gamma(s_*) + \gamma_i(\|v\|_\infty) \big] v + \Gamma_d(v), &&\qquad (t,x)\in (0,T_{\text{max}})\times \Omega, \label{b13a} \\
		\nabla v\cdot \mathbf{n} & = 0, &&\qquad (t,x)\in (0,T_{\text{max}})\times \partial\Omega, \label{b13b}\\
		v(0) & = v^{in}, &&\qquad x\in\Omega. \label{b13c}
	\end{align}
\end{subequations}

We next introduce the solution $V$ to the following ordinary differential equation
\begin{subequations}\label{b14}
	\begin{align}
		\frac{dV}{dt} + V \gamma(V) & = \big[ \gamma(s_*) + \gamma_i(\|v(t)\|_\infty) \big] V + \Gamma_d(V), \qquad t\in (0,T_{\text{max}}), \label{b14a}\\
		V(0) & =s_*, \label{b14b}
	\end{align}
\end{subequations}
noticing that the non-positivity~\eqref{b11} of $\Gamma_d$ indeed guarantees that $V$ is well-defined on $[0,T_{\text{max}})$. Since $s_*\ge \|v^{in}\|_\infty$ by Lemma~\ref{lem2.3}, we deduce from~\eqref{b13}, \eqref{b14}, and the parabolic comparison principle that 
\begin{equation}
	\|v(t)\|_\infty\leq V(t), \qquad t\in (0,T_{\text{max}}). \label{b15}
\end{equation}
Let $T\in (0,T_{\text{max}})$. On the one hand, the continuity of~$v$, the positivity of $\gamma$, and the non-positivity~\eqref{b11} of $\Gamma_d$ imply that
\begin{equation}
	V(t) \le \mathcal{V}_T := s_* \exp\left\{ T \left[ \gamma(s_*) + \sup_{\tau\in [0,T]}\{\gamma_i(\|v(\tau)\|_\infty)\} \right] \right\}, \qquad t\in [0,T]. \label{b16}
\end{equation}
On the other hand, owing to the monotonicity of $\gamma_i$, it follows from~\eqref{b14a} and~\eqref{b16} that
\begin{equation}
	\frac{dV}{dt} + V \gamma(V) \leq \big[ \gamma(s_*) + \gamma_i(V) \big] V + \Gamma_d(V), \qquad t\in (0,T_{\text{max}}). \label{b17}
\end{equation}
Now, set $G(s) := \Gamma_d(s)-s\gamma_d(s)$ for $s\ge v_*$ and notice that $G(s)=0$ for $s\in [v_*,s_*]$ by~\eqref{b09} and~\eqref{b11}. We infer from~\eqref{b18} and~\eqref{b17} that
\begin{align*}
	\frac{d(V-s_*)_+}{dt} \leq & \big[ \gamma(s_*) + \gamma_i(V) - \gamma(V) \big] V \mathrm{sign}_+(V-s_*) + \Gamma_d(V) \mathrm{sign}_+(V-s_*)\\
	= & \big[ \gamma(s_*) + \gamma_i(V) - \gamma(s_*) - \gamma_i(V) - \gamma_d(V) \big] V \mathrm{sign}_+(V-s_*) + \Gamma_d(V) \mathrm{sign}_+(V-s_*)\\
	= & \big( G(V)-G(s_*) \big) \mathrm{sign}_+(V-s_*)\\
	= & \frac{G(V)-G(s_*)}{V-s_*} (V-s_*)_+.
\end{align*}
Now,  since $G'(s)=-s\gamma_d'(s)\geq0$ for $s\ge s_*$, we deduce from~\eqref{b16} that
\begin{equation*}
	0\leq \frac{G(V)-G(s_*)}{V-s_*} (V-s_*)_+ \leq (V-s_*)_+ \sup\limits_{s\in [s_*,\mathcal{V}_T]}\{G'(s)\}, \qquad t\in [0,T],
\end{equation*}
and we conclude that $V(t)\leq s_*$ for $t\in [0,T]$. As $T$ is arbitrary in $(0,T_{\text{max}})$, we have shown~\eqref{b12} in view of \eqref{b15}.
\end{proof}

\begin{proof}[Proof of Theorem~\ref{thm1}]
	Theorem~\ref{thm1} now readily follows from Proposition~\ref{prop2.5} and Proposition \ref{prop2.2}.
\end{proof}

\begin{proof}[Proof of Corollary~\ref{cor2}]
	Either $\gamma$ satisfies~\eqref{A0} and Corollary~\ref{cor2} is an immediate consequence of Theorem~\ref{thm1}. Or $\gamma$ belongs to $L^\infty(v_*,\infty)$ and Corollary~\ref{cor2} follows at once from~\cite[Theorem~1.1]{JiLa2021}.
\end{proof}

\subsection{Continuous dependence} \label{sec2.3}

An interesting corollary of the previous analysis is the continuous dependence of the solutions to~\eqref{ks} with respect to the initial condition $u^{in}$, a property which is likely to follow as well from the proof of \cite[Lemma~3.1]{AhYo2019}. This property is needed later in the proof of Theorem~\ref{thm4}. We recall that, according to Corollary~\ref{cor2}, the problem~\eqref{ks} is well-posed in a classical sense under the sole assumption~\eqref{Aad0} on $\gamma$.

\begin{proposition}\label{prop2.6}
	Assume that $\gamma$ satisfies~\eqref{Aad0} and consider a sequence $(u_j^{in})_{j\ge 1}$ in $W_+^{1,\infty}(\Omega)$ and $u^{in}\in W_+^{1,\infty}(\Omega)$ such that $\|u^{in}\|_1>0$ and 
	\begin{equation}
		\lim\limits_{j\to \infty} \|u_j^{in} - u^{in}\|_{\infty} = 0. \label{c01}
	\end{equation}
	For $j\ge 1$, let $(u_j,v_j)$ be the classical solution to~\eqref{ks} with initial condition $u_j^{in}$ and denote the classical solution to~\eqref{ks} with initial condition $u^{in}$ by $(u,v)$, their existence being guaranteed by Corollary~\ref{cor2}. Then, for any $T>0$,
	\begin{equation}
		\lim\limits_{j\to\infty} \sup_{t\in [0,T]}\big\{ \|(u_j-u)(t)\|_{\infty}  + \|(v_j-v)(t)\|_{\infty} \big\} = 0. \label{c02}
	\end{equation}
\end{proposition}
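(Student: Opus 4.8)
The plan is to compare the two solutions by subtracting the equations and using the elliptic regularity of $\mathcal{A}^{-1}$ together with Gronwall's inequality, after first securing uniform-in-$j$ bounds. First I would observe that, by~\eqref{c01}, the sequence $\|u_j^{in}\|_\infty$ is bounded, hence so is $\|v_j^{in}\|_\infty = \|\mathcal{A}^{-1}[u_j^{in}]\|_\infty \le \|u_j^{in}\|_\infty$; moreover $\|u_j^{in}\|_1 = \|u^{in}\|_1 + o(1) > 0$ for $j$ large, so the lower bound $v_j \ge v_{*,j} := \|u_j^{in}\|_1 \omega_*$ from~\eqref{b00} is uniformly positive. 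Applying Lemma~\ref{lem2.3} (or Lemma~\ref{lem2.2} in the monotone subcase) with these uniform data, the construction of $s_*$ depends only on $\gamma$ and on an upper bound for $\|v_j^{in}\|_\infty$ and a lower bound for $v_{*,j}$, so one obtains a single $s_*$ with $\|v_j(t)\|_\infty \le s_*$ for all $j$ large and all $t\ge 0$. Proposition~\ref{prop2.2} then upgrades this to a uniform bound $\|u_j(t)\|_\infty \le \mathcal{U}_\infty$ and, via its intermediate steps (the H\"older estimate and the $L^\infty((0,T),W^{2,r})$ estimate), to $j$-uniform H\"older bounds on $v_j$ on $[0,T]\times\bar\Omega$; since $\gamma\in C^3$, $\gamma(v_j)$ is then $j$-uniformly H\"older bounded below and above by positive constants.

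Next I would set $w := u_j - u$ and $z := v_j - v$ and write $z = \mathcal{A}^{-1}[w]$. From~\eqref{ks1}, $\partial_t w = \Delta(u_j\gamma(v_j) - u\gamma(v)) = \Delta\big(\gamma(v_j) w + u(\gamma(v_j)-\gamma(v))\big)$ with homogeneous Neumann data. The natural estimate is a duality/testing argument: test the equation for $w$ against $\mathcal{A}^{-1}[w] = z$ to obtain, after integration by parts,
\begin{equation*}
	\frac12\frac{d}{dt}\|w\|_{(H^1)'}^2 + \int_\Omega \gamma(v_j)\, w\,\mathcal{A}[z]\, dx = -\int_\Omega u(\gamma(v_j)-\gamma(v))\,\mathcal{A}[z]\, dx,
\end{equation*}
where $\|w\|_{(H^1)'}^2 = \int_\Omega w\,\mathcal{A}^{-1}[w]\,dx$ is equivalent to the $H^{-1}$-type norm. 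Using $\mathcal{A}[z] = w$ on the left and the mean-value bound $|\gamma(v_j)-\gamma(v)| \le \|\gamma'\|_{L^\infty([v_*,s_*])} |z|$ on the right, together with the uniform lower bound $\gamma(v_j) \ge \gamma_*>0$, this yields
$\frac12\frac{d}{dt}\|w\|_{(H^1)'}^2 + \gamma_* \|w\|_2^2 \le C\,\mathcal{U}_\infty \|z\|_2 \|w\|_2 \le \frac{\gamma_*}{2}\|w\|_2^2 + C'\|z\|_2^2$, and since $\|z\|_2 = \|\mathcal{A}^{-1}[w]\|_2 \le C\|w\|_{(H^1)'}$ by elliptic regularity, Gronwall gives $\|w(t)\|_{(H^1)'} + \|z(t)\|_2 \to 0$ on $[0,T]$ once the initial datum $\|u_j^{in}-u^{in}\|_{(H^1)'} \le C\|u_j^{in}-u^{in}\|_\infty |\Omega|^{1/2} \to 0$.

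Finally, to promote this $L^2$/$H^{-1}$ convergence to the uniform convergence~\eqref{c02}, I would use the $j$-uniform regularity bounds from the first paragraph: $(u_j)_j$ and $(v_j)_j$ are bounded in suitable H\"older (or $L^\infty((0,T),W^{2,r})$) spaces, so by a compactness/interpolation argument the weak convergence forced by the $L^2$-estimate, combined with these uniform higher-norm bounds, yields convergence in $C([0,T]\times\bar\Omega)$ for both components; alternatively one interpolates $\|z\|_\infty \lesssim \|z\|_2^{\theta}\|z\|_{W^{2,r}}^{1-\theta}$ and similarly for $w$ using the $W^{1,r}$-type bounds obtained in the bootstrap of Proposition~\ref{prop2.2}. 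The main obstacle I anticipate is handling the quasilinear term $\Delta(\gamma(v_j)w)$ cleanly: the coefficient $\gamma(v_j)$ is only H\"older, not smooth in $(t,x)$, so one cannot differentiate it freely, which is precisely why the low-regularity duality estimate against $\mathcal{A}^{-1}[w]$ (where the troublesome term appears undifferentiated as $\int \gamma(v_j) w^2$ with a favourable sign) is the right vehicle, rather than an energy estimate in $L^2$ directly. Keeping all constants independent of $j$ throughout — which hinges on the uniform $s_*$ from Lemma~\ref{lem2.3} — is the bookkeeping one must be careful about.
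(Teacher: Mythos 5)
Your argument is correct in its essentials but takes a genuinely different route from the paper. The first half (uniform-in-$j$ bounds: $\|v_j\|_\infty \le s_*$, then via Proposition~\ref{prop2.2} uniform bounds on $\|u_j\|_\infty$ and uniform H\"older/Sobolev regularity) coincides with the paper. The divergence is in how you extract the convergence~\eqref{c02}. The paper uses a \emph{soft} compactness argument: the uniform bounds in $C^{(2+\alpha_1)/2,2+\alpha_1}$ (for $u_j$) and $C^{\alpha_1/2,2+\alpha_1}$ (for $v_j$) plus Arzel\`a--Ascoli give a uniformly converging subsequence; its limit is identified as a classical solution with initial datum $u^{in}$, so by the uniqueness asserted in Proposition~\ref{prop2.1} it must be $(u,v)$, and a standard sub-subsequence argument upgrades this to convergence of the whole sequence. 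You instead prove the convergence \emph{quantitatively}: a duality estimate for $w=u_j-u$ tested against $\mathcal{A}^{-1}[w]=z$, Gronwall on $\|z\|_{H^1}^2 = \int w\,\mathcal{A}^{-1}[w]\,dx$, and interpolation against the uniform higher-norm bounds. Your route buys an explicit rate (exponential in $T$ times $\|u_j^{in}-u^{in}\|_\infty$), which the paper's compactness argument does not; the paper's route is cleaner in that it never needs to manipulate the quasilinear difference equation at all.

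One computational slip worth flagging. After the double integration by parts, the right-hand side is $\int (\gamma(v_j)w + u(\gamma(v_j)-\gamma(v)))\Delta z\,dx$ with $\Delta z = z - w = z - \mathcal{A}[z]$, not $-\mathcal{A}[z]$. Your displayed identity should therefore read
\begin{equation*}
\frac12\frac{d}{dt}\|w\|_{(H^1)'}^2 + \int_\Omega \gamma(v_j)\,w^2\,dx = \int_\Omega \gamma(v_j)\,w\,z\,dx + \int_\Omega u\,(\gamma(v_j)-\gamma(v))\,(z-w)\,dx,
\end{equation*}
i.e.\ there are two extra terms involving $z$ that you have omitted. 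These are harmless: $|\int \gamma(v_j)wz| \le \gamma^*\|w\|_2\|z\|_2$ and $|\int u(\gamma(v_j)-\gamma(v))z| \le L\,\mathcal{U}_\infty\|z\|_2^2$ are both absorbed exactly as the term you kept, so the Gronwall conclusion $\frac{d}{dt}\|z\|_{H^1}^2 + \gamma_*\|w\|_2^2 \le C\|z\|_{H^1}^2$ stands. Also note that passing from $\int_0^T\|w\|_2^2\,dt \to 0$ to $\sup_{[0,T]}\|w\|_\infty \to 0$ requires the uniform \emph{space-time} H\"older bound on $u_j$ (the paper's~\eqref{c08}), not merely a spatial $W^{1,r}$ bound: one needs equicontinuity in $(t,x)$ to rule out a persisting small bump that escapes the $L^2_{t,x}$-norm. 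With that bound the interpolation argument you sketch is sound.
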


\begin{proof}
	Setting 
	\begin{equation*}
		v_j^{in} := \mathcal{A}^{-1}[u_j^{in}], \quad j\ge 1, \;\;\text{ and }\;\; v^{in} := \mathcal{A}^{-1}[u^{in}],
	\end{equation*}
	it readily follows from~\eqref{c01} that we may assume without loss of generality that there are $m_0>0$ and $M_0>0$ such that
	\begin{equation}
		\|u_j^{in}\|_1 \ge m_0 \;\;\text{ and }\;\; \|v_j^{in}\|_\infty \le M_0, \qquad j\ge 1. \label{c03}
	\end{equation}
	We then argue as in Section~\ref{sec2.2} (with $m_0$ instead of $m$ and $M_0$ instead of $\|v^{in}\|_\infty$) to establish that there is $s_0\ge M_0$ such that
	\begin{equation}
		m_0 \omega_* \le v_j(t,x) \le s_0, \qquad (t,x)\in [0,\infty) \times\bar{\Omega}, \ j\ge 1, \label{c04}
	\end{equation}
the lower bound in~\eqref{c04} being a consequence of Lemma~\ref{lem2} and~\eqref{c03}. Owing to~\eqref{c04}, we next proceed as in \cite[Proposition~4.6]{JiLa2021} to show that there is $\alpha\in (0,1/2)$ such that
\begin{equation}
	(v_j)_{j\ge 1} \;\;\text{ is bounded in }\; BUC^{2\alpha}\big( [0,\infty),C^{2\alpha}(\bar{\Omega}) \big) .\label{c05}
\end{equation}
In addition, arguing as in \cite[Proposition~4.7]{JiLa2021}, we obtain that, for any $p\in (N,\infty)$, there is $C_0(p)>0$ such that
\begin{equation}
	\|v_j(t)\|_{W^{2,p}} \le C_0(p), \qquad t\ge 0, \ j\ge 1. \label{c06}
\end{equation}
Fix $p>2N/(1-2\alpha)$. Since $C^{2\alpha}(\bar{\Omega})$ is continuously embedded in $L^p(\Omega)$ and interpolation and Sobolev inequalities guarantee that there are $C_1(p)>0$ and $C_2(p)>0$ such that
\begin{equation*}
	\|z\|_{C^{1+\alpha}} \le C_1(p) \|z\|_{W^{3/2,p}} \le C_2(p) \|z\|_{W^{2,p}}^{3/4} \|z\|_p^{1/4}, \qquad z\in W^{2,p}(\Omega),
\end{equation*}
we infer from~\eqref{c06} and the above inequality that, for $j\ge 1$ and $(t,s)\in [0,\infty)^2$, 
\begin{align*}
	\|v_j(t)-v_j(s)\|_{C^{1+\alpha}} & \le C_2(p) \left( \|v_j(t)\|_{W^{2,p}}^{3/4} + \|v_j(s)\|_{W^{2,p}}^{3/4} \right) \|v_j(t)-v_j(s)\|_p^{1/4} \\
	& \le 2 C_0(p)^{3/4} C_2(p) |\Omega|^{1/4p}\|v_j(t)-v_j(s)\|_{C^{2\alpha}}^{1/4}.
\end{align*}  
\begin{subequations}\label{reg}
Hence, owing to~\eqref{c05}, we conclude that
\begin{equation}
	(v_j)_{j\ge 1} \;\;\text{ is bounded in }\; BUC^{\alpha/2}\big( [0,\infty),C^{1+\alpha}(\bar{\Omega}) \big). \label{c07}
\end{equation}
It then follows from~\eqref{c04}, \eqref{c07}, and parabolic regularity for linear equations \cite[Chapter~V, Theorem~1.1]{LSU1968} applied to~\eqref{ks1} considered as a linear equation for $u_j$ that there is $\alpha_1\in (0,1)$ such that
\begin{equation}
	(u_j)_{j\ge 1} \;\;\text{ is bounded in }\; C^{\alpha_1/2,\alpha_1}\big( [0,T]\times \bar{\Omega} \big) \;\;\text{ for any }\;\; T>0. \label{c08}
\end{equation}
We now use elliptic regularity theory to deduce from~\eqref{ks2} and~\eqref{c08} that
\begin{equation}
	(v_j)_{j\ge 1} \;\;\text{ is bounded in }\; C^{\alpha_1/2,2+\alpha_1}\big( [0,T]\times \bar{\Omega} \big)  \;\;\text{ for any }\;\; T>0, \label{c09}
\end{equation}
which, together with parabolic regularity applied to \eqref{ks1}, see \cite[Chapter~IV]{LSU1968}, gives that
\begin{equation}
	(u_j)_{j\ge 1} \;\;\text{ is bounded in }\; C^{(2+\alpha_1)/2,2+\alpha_1}\big( (0,T)\times \bar{\Omega} \big) \;\;\text{ for any }\;\; T>0. \label{c10}
\end{equation}
\end{subequations}
We now infer from~\eqref{reg} and the Arzel\`a-Ascoli theorem that there are a subsequence of $(u_j,v_j)_{j\ge 1}$ (not relabeled) and 
\begin{equation*}
	\tilde{u}\in C\big( [0,\infty)\times\bar{\Omega} \big) \cap C^{1,2}\big( (0,\infty)\times\bar{\Omega} \big) , \quad \tilde{v} \in C\big( [0,\infty),C^1(\bar{\Omega}) \big) 
\end{equation*}
such that
\begin{equation}
	\lim_{j\to\infty} \sup_{t\in [0,T]} \left\{ \|(u_j - \tilde{u})(t)\|_\infty + \|(v_j-\tilde{v})(t)\|_{C^1} \right\} = 0 \;\;\text{ for any }\;\; T>0. \label{c11}
\end{equation}
Since $(u_j,v_j)$ solves~\eqref{ks} with initial condition $u_j^{in}$, we combine~\eqref{reg} with the convergence~\eqref{c11} to conclude that $(\tilde{u},\tilde{v})$ is a non-negative global classical solution to~\eqref{ks} with initial condition $u^{in}$; that is, $(\tilde{u},\tilde{v}) = (u,v)$ by the uniqueness of classical solutions to~\eqref{ks}, see Proposition~\ref{prop2.1}, so that it is the whole sequence $\big( (u_j,v_j) \big)_{j\ge 1}$ which actually converges to $(u,v)$ and satisfies~\eqref{c11}. We have thus completed the proof.
\end{proof}

\section{Explicit upper and lower bounds on $v$}\label{sec4}

The main building block of the proof of Theorem~\ref{thm4} is the following lemma, which is somewhat a generalisation of Lemma~\ref{lem2.2}. 

\begin{lemma} \label{lem4.1}
	Assume that $\gamma$ satisfies~\eqref{Aad0} and let $u^{in}\in W_+^{1,\infty}(\Omega)$ with $m=\|u^{in}\|_1>0$. We denote the corresponding global classical solution to~\eqref{ks} by $(u,v)$, see Corollary~\ref{cor2}. Assume further that there are $0<a<b$ such that
	\begin{align}
		& \gamma'\ge 0 \;\;\text{ on }\;\; [a,b], \label{d01} \\
		& a < \min_{\bar{\Omega}}\{ v^{in} \} \le \max_{\bar{\Omega}}\{ v^{in} \} < b, \label{d02}
	\end{align}
where $v^{in} = \mathcal{A}^{-1}[u^{in}]$. Then
\begin{equation*}
	a \le v(t,x) \le b, \qquad (t,x)\in [0,\infty)\times\bar{\Omega}.
\end{equation*}
\end{lemma}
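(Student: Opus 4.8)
The plan is to exhibit $[a,b]$ as an invariant region for $v$ by a connectedness argument, running the comparison scheme of Lemma~\ref{lem2.2} (and its lower-bound analogue) only on the time interval where $v$ is already known to take values in $[a,b]$ — this is precisely the set on which the \emph{local} monotonicity~\eqref{d01} of $\gamma$ along $v$ is available. Concretely, recalling that $(u,v)$ is global by Corollary~\ref{cor2}, I would set $M(t):=\|v(t)\|_\infty$ and $m(t):=\min_{\bar\Omega}v(t,\cdot)$ (continuous on $[0,\infty)$) and introduce the closed set
\[
  \mathcal{T} := \big\{ t\in[0,\infty)\ :\ a\le v(s,x)\le b \ \text{ for all } (s,x)\in[0,t]\times\bar\Omega \big\},
\]
which, by~\eqref{d02} and the continuity of $v$, contains some interval $[0,\varepsilon]$; write $\mathcal{T}=[0,T^*]$ with $T^*\in(0,\infty]$.

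On $[0,T^*]\times\bar\Omega$ one has $a\le v\le b$, hence $M(t),m(t)\in[a,b]$, and~\eqref{d01} yields $\gamma(m(t))\le\gamma(v(t,x))\le\gamma(M(t))$ there. Since $u=\mathcal{A}[v]\ge0$ and $\gamma>0$, the elliptic comparison principle then gives $\gamma(m(t))\,v\le\mathcal{A}^{-1}[u\gamma(v)]\le\gamma(M(t))\,v$ on $[0,T^*]\times\Omega$, exactly as in the proof of Lemma~\ref{lem2.2}; inserting this into~\eqref{veqa} shows that, on $(0,T^*)\times\Omega$ and with homogeneous Neumann data, $v$ is a subsolution of $\partial_t w-\gamma(w)\Delta w+w\gamma(w)=\gamma(M(t))w$ and a supersolution of the same equation with $\gamma(M(t))$ replaced by $\gamma(m(t))$. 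Next I would introduce the scalar solutions $V$ of $\dot V+V\gamma(V)=\gamma(M(t))V$ with $V(0)=\max_{\bar\Omega}v^{in}$, and $W$ of $\dot W+W\gamma(W)=\gamma(m(t))W$ with $W(0)=\min_{\bar\Omega}v^{in}$; being spatially homogeneous they are a supersolution and a subsolution of the corresponding parabolic problems, so the parabolic comparison principle (as in Lemma~\ref{lem2.2}) gives $W(t)\le v(t,x)\le V(t)$ on $[0,T^*]\times\bar\Omega$.

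This last step has to be carried out as a bootstrap, and that is the delicate point. As long as $V$ is defined, $a\le M(t)\le V(t)$; since $V(0)=\max_{\bar\Omega}v^{in}<b$, a first time $t_1$ with $V(t_1)=b$ is impossible, because on $[0,t_1)$ one would have $\gamma(M(t))\le\gamma(V(t))$ by~\eqref{d01}, hence $\dot V=(\gamma(M(t))-\gamma(V))V\le0$, contradicting $V(t_1)=b>V(0)$. Thus $V$ stays in the compact subinterval $[a,\max_{\bar\Omega}v^{in}]$ of $(0,\infty)$, so it is global, non-increasing, and $v\le V\le\max_{\bar\Omega}v^{in}$ on $[0,T^*]$; symmetrically $W$ cannot reach $a$ from above, is non-decreasing, and $v\ge W\ge\min_{\bar\Omega}v^{in}$ on $[0,T^*]$. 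To close the argument I would rule out $T^*<\infty$: it would force $\min_{\bar\Omega}v^{in}\le v(T^*,x)\le\max_{\bar\Omega}v^{in}$ for all $x\in\bar\Omega$, hence $a<v(T^*,x)<b$ on the compact set $\bar\Omega$ by~\eqref{d02}, so by continuity of $v$ there would be $\delta>0$ with $a\le v\le b$ on $[0,T^*+\delta]\times\bar\Omega$, i.e. $T^*+\delta\in\mathcal{T}$, contradicting $T^*=\sup\mathcal{T}$. Hence $T^*=\infty$, and re-running the comparison step on all of $[0,\infty)$ yields $a<\min_{\bar\Omega}v^{in}\le v(t,x)\le\max_{\bar\Omega}v^{in}<b$, which is even slightly stronger than claimed.

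The main obstacle is exactly the bootstrap loop just described: the auxiliary ODEs are only guaranteed to be well-posed and to remain in $(0,\infty)$ once we know $V,W$ stay in $[a,b]$, which is what the parabolic comparison delivers, while every comparison inequality (elliptic and parabolic) is legitimate only while $v$ and $M,m,V,W$ lie in $[a,b]$ — this interdependence is the reason one is forced to argue through the connectedness set $\mathcal{T}$ rather than to apply Lemma~\ref{lem2.2} in a single stroke. A secondary technical point, already handled in Lemma~\ref{lem2.2}, is that the quasilinear operator $\partial_t w-\gamma(v)\Delta w+\cdots$ admits a comparison principle because $\gamma(v)$ is continuous and bounded between positive constants on the region $v\in[a,b]$.
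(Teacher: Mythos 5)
Your proof is correct and follows essentially the same route as the paper: the paper introduces the first exit time $\tau$ of $v$ from $[a,b]$ (your $T^*$, i.e.\ $\sup\mathcal{T}$), compares $v$ with the ODE solutions $V_u,V_l$ on $[0,\tau)$, and then runs the secondary bootstrap through a second time $\tau_*$ (your $t_1$) to show $V_u\searrow$, $V_l\nearrow$ and hence $a<V_l(0)\le v\le V_u(0)<b$, which forces $\tau=\infty$. Your connectedness/closed-set phrasing and the observation that one must also rule out a first touching time for the scalar comparison functions are exactly the two stages in the paper's argument, just packaged slightly differently.
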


\begin{proof}
	For $t\ge 0$, we put
	\begin{equation*}
		 v_* \le \mu(t) := \min_{x\in\bar{\Omega}}\{ v(t,x)\} \le M(t) :=  \max_{x\in\bar{\Omega}}\{ v(t,x)\},
	\end{equation*}
which are well-defined and positive according to elliptic regularity and Lemma~\ref{lem2}. Owing to the continuity of $v$, it follows from~\eqref{d02} that
\begin{equation*}
	\tau := \inf\left\{ t>0\ :\ M(t)>b \;\;\text{ or }\; \mu(t)<a \right\} \in (0,\infty].
\end{equation*}
In particular,
\begin{equation}
	a \le \mu(t) \le M(t) \le b, \qquad t\in [0,\tau). \label{d07}
\end{equation}
By~\eqref{d01}, \eqref{d07}, and the non-negativity of $u$,
\begin{equation*}
	u \gamma(\mu) \le u \gamma(v) \le u\gamma(M) \;\;\text{ in }\;\; [0,\tau)\times \bar{\Omega},
\end{equation*}
and the elliptic comparison principle and~\eqref{ks2} imply that
\begin{equation}
	\gamma(\mu) v = \mathcal{A}^{-1}[\gamma(\mu)u] \le \mathcal{A}^{-1}[\gamma(v)u] \le \mathcal{A}^{-1}[\gamma(M)u] = \gamma(M) v \;\;\text{ in }\;\; [0,\tau)\times \bar{\Omega}. \label{d03}
\end{equation} 
It follows from~\eqref{veq} and~\eqref{d03} that $v$ satisfies
\begin{subequations}\label{d04}
	\begin{align}
		\gamma(M) v \ge\partial_t v - \gamma(v) \Delta v + v \gamma(v) & \ge \gamma(\mu) v, \qquad (t,x)\in (0,\tau)\times \Omega, \label{d04a} \\
		\nabla v\cdot \mathbf{n} & = 0, \qquad (t,x)\in (0,\tau)\times \partial\Omega, \label{d04b}\\
		v(0) & = v^{in}, \qquad x\in\Omega. \label{d04c}
	\end{align}
\end{subequations}
The parabolic comparison principle then entails that
\begin{equation}
	V_u(t) \ge v(t,x) \ge V_l(t), \qquad (t,x)\in [0,\tau)\times \bar{\Omega}, \label{d06}
\end{equation} 
where $V_l$ and $V_u$ are the solutions to the ordinary differential equations
\begin{subequations}\label{d05}
	\begin{align}
		\frac{dV_l}{dt} + V_l \gamma(V_l) & = \gamma(\mu) V_l, \qquad t\in (0,\tau), \label{d05a} \\
		V_l(0) & = \mu(0), \label{d05b}
	\end{align}
\end{subequations}
and
\begin{subequations}\label{d10}
	\begin{align}
		\frac{dV_u}{dt} + V_u \gamma(V_u) & = \gamma(M) V_u, \qquad t\in (0,\tau), \label{d10a} \\
		V_u(0) & = M(0), \label{d10b}
	\end{align}
\end{subequations}
respectively.
Since $b> V_u(0) \ge V_l(0)>a$ by~\eqref{d02}, 
\begin{equation*}
	\tau_* := \inf\left\{ t\in (0,\tau)\ :\ V_u(t)>b \;\;\text{ or }\;V_l(t)<a \right\} \in (0,\tau],
\end{equation*}
and we infer from~\eqref{d01}, \eqref{d07}, and~\eqref{d06} that 
\begin{equation}
	\gamma(V_l(t)) \le \gamma(\mu(t)) \le \gamma(v(t,x)) \le \gamma(M(t)) \le \gamma(V_u(t)), \qquad (t,x)\in [0,\tau_*)\times\bar{\Omega}. \label{d08}
\end{equation}
Combining~\eqref{d06}, \eqref{d05}, \eqref{d10}, and~\eqref{d08}, we deduce that
\begin{equation*}
	\frac{dV_l}{dt}(t) \ge 0 \ge \frac{dV_u}{dt}(t), \qquad t\in (0,\tau_*).  
\end{equation*}
Hence, owing to~\eqref{d02}
\begin{equation*}
	V_l(t) \ge V_l(0) > a \;\;\text{ and }\;\; V_u(t) \le V_u(0) < b, \qquad t\in [0,\tau_*),
\end{equation*}
so that $\tau_*=\tau$. Gathering~\eqref{d06} and the above inequalities, we conclude that
\begin{equation*}
	a < V_l(t) \le v(t,x) \le V_u(t) < b, \qquad (t,x)\in [0,\tau)\times\bar{\Omega}.
\end{equation*}
Consequently, $\tau=\infty$ and the proof of Lemma~\ref{lem4.1} is complete. 
\end{proof}

\begin{proof}[Proof of Theorem~\ref{thm4}]
	If $u^{in}\equiv m/|\Omega|$, then $(u,v)=(m/|\Omega|,m/|\Omega|)$ and Theorem~\ref{thm4} is obvious. Let us thus assume that $u^{in}$ is not constant, so that $v^{in}$ is also not constant. We then pick $\nu\in \big( \min_{\bar{\Omega}}\{v^{in}\} , \max_{\bar{\Omega}}\{v^{in}\}\big)$ and define 
	\begin{equation*}
		u_j^{in} := \frac{j-1}{j} u^{in} + \frac{1}{j}\nu, \qquad v_j^{in} := \mathcal{A}^{-1}[u_j^{in}] = \frac{j-1}{j} v^{in} + \frac{1}{j}\nu, \qquad j\ge 1.
	\end{equation*}
	Clearly, for each $j\ge 1$,
	\begin{align*}
		& \min_{\bar{\Omega}}\{v^{in}\} < \frac{j-1}{j} \min_{\bar{\Omega}}\{v^{in}\} + \frac{1}{j} \nu = \min_{\bar{\Omega}}\{v_j^{in}\} , \\
		& \max_{\bar{\Omega}}\{v^{in}\} > \frac{j-1}{j} \max_{\bar{\Omega}}\{v^{in}\} + \frac{1}{j} \nu = \max_{\bar{\Omega}}\{v_j^{in}\},
	\end{align*}
and we infer from Lemma~\ref{lem4.1} that the global classical solution $(u_j,v_j)$ to~\eqref{ks} with initial condition $u_j^{in}$ satisfies
\begin{equation*}
	\min_{\bar{\Omega}}\{v^{in}\} \le v_j(t,x) \le \max_{\bar{\Omega}}\{v^{in}\}, \qquad (t,x)\in [0,\infty)\times\bar{\Omega}.
\end{equation*}
We then pass to the limit $j\to\infty$ in the above inequality with the help of Proposition~\ref{prop2.6} to complete the proof. 
\end{proof}

\section{Explicit $L^\infty$-estimates on $u$}\label{sec3}

This section is devoted to the proof of Theorem~\ref{thm3} and we begin with the case of non-increasing and concave motility function $\gamma$. In this case, the evolution equation~\eqref{veqa} for $v$ is not used and the proof relies on direct computations involving~\eqref{ks1} and~\eqref{ks2} and comparison arguments.

\begin{proof}[Proof of Theorem~\ref{thm3}~(Part 1)]
	Let us thus consider a function $\gamma$ satisfying~\eqref{Aad0} and being non-decreasing and concave on $\big[ \min_{\bar{\Omega}}\{v^{in}\} ,\max_{\bar{\Omega}}\{v^{in}\} \big]$. By~\eqref{ks1} and~\eqref{ks2},
	\begin{align*}
		\partial_t u & = \mathrm{div}\big( \gamma(v) \nabla u + u \gamma'(v) \nabla v \big) \\
		& = \gamma(v) \Delta u + 2 \gamma'(v) \nabla v\cdot \nabla u + u \gamma''(v) |\nabla v|^2 + u \gamma'(v) \Delta v \\
		& = \gamma(v) \Delta u + 2 \gamma'(v) \nabla v\cdot \nabla u + u \gamma''(v) |\nabla v|^2 + u \gamma'(v) (v-u).
	\end{align*}
	Since $v=\mathcal{A}^{-1}[u]$ and $u\le \|u\|_\infty$ in $\Omega$, the elliptic comparison principle implies that
	\begin{equation*}
		v(t,x)\le \|u(t)\|_\infty, \qquad (t,x)\in [0,\infty)\times\bar{\Omega}.
	\end{equation*}
	Consequently, recalling that $v$ ranges in $\big[ \min_{\bar{\Omega}}\{v^{in}\} ,\max_{\bar{\Omega}}\{v^{in}\} \big]$ by Theorem~\ref{thm4}, it follows from the monotonicity and concavity of $\gamma$ and the non-negativity of $u$ that
	\begin{equation*}
		u \gamma''(v) |\nabla v|^2 + u \gamma'(v) (v-u) \le u \gamma'(v) \big( \|u\|_\infty - u \big) \le Y u \big( \|u\|_\infty - u \big)
	\end{equation*}
	with $Y := \gamma'\left( \min_{\bar{\Omega}}\{v^{in}\} \right) \ge 0$ and we conclude that
	\begin{subequations}\label{c1}
		\begin{align}
			& \partial_t u - \gamma(v) \Delta u -2 \gamma'(v) \nabla v\cdot \nabla u \le  Y u \big( \|u\|_\infty - u \big) \;\;\text{ in }\; (0,\infty)\times\Omega, \label{c1a} \\
			& \nabla u\cdot \mathbf{n} = 0 \;\;\text{ on }\; (0,\infty)\times\partial\Omega, \label{c1b} \\
			& u(0) = u^{in} \;\;\text{ in }\; \Omega. \label{c1c}
		\end{align}
	\end{subequations}
	Introducing the solution $U\in C^1([0,\infty))$ to the ordinary differential equation
	\begin{subequations}\label{c2}
		\begin{align}
			&\frac{dU}{dt} = Y U \big( \|u\|_\infty - U \big), \qquad t>0, \label{c2a} \\
			& U(0) = \|u^{in}\|_\infty, \label{c2b}
		\end{align}
	\end{subequations}
	it readily follows from~\eqref{c1}, \eqref{c2}, and the parabolic comparison principle that 
	\begin{equation}
		u(t,x) \le U(t), \qquad (t,x)\in [0,\infty)\times\bar{\Omega}. \label{c3}
	\end{equation}
	In particular, $\|u(t)\|_\infty \le U(t)$ for all $t\ge 0$ by~\eqref{c3} and we infer from~\eqref{c2a} and the non-negativity of $Y$ that $dU/dt\le 0$ in $(0,\infty)$. Therefore, $U(t)\le U(0)$ for $t\ge 0$ which, together with~\eqref{c2b} and~\eqref{c3}, completes the proof.
\end{proof}

We next turn to general non-decreasing motility functions $\gamma$ and use the comparison principle applied to the parabolic equation satisfied by the auxiliary function $u\gamma(v)$.

\begin{proof}[Proof of Theorem~\ref{thm3}~(Part 2)]
	We assume here that $\gamma$ satisfies~\eqref{Aad0} and~\eqref{mg}. Introducing $\varphi := u\gamma(v)$, we infer from~\eqref{ks1} and~\eqref{veqa} that 
	\begin{align*}
		\partial_t \varphi & = \gamma(v) \partial_t u + u \gamma'(v) \partial_t v = \gamma(v) \Delta\varphi + u \gamma'(v) \left( \mathcal{A}^{-1}[\varphi] - \varphi \right) \\
		& = \gamma(v) \Delta\varphi + \frac{\gamma'}{\gamma}(v) \varphi \left( \mathcal{A}^{-1}[\varphi] - \varphi \right).
	\end{align*}
	Since 
	\begin{equation*}
		\mathcal{A}^{-1}[\varphi] \le \|\varphi\|_\infty \;\;\text{ in }\; [0,\infty)\times\bar{\Omega}
	\end{equation*}
	by the elliptic comparison principle, it follows from the monotonicity of $\gamma$, the non-negativity of $\varphi$, and Theorem~\ref{thm4} that
	\begin{equation*}
		\frac{\gamma'}{\gamma}(v) \varphi \left( \mathcal{A}^{-1}[\varphi] - \varphi \right) \le \frac{\gamma'}{\gamma}(v) \varphi \left( \|\varphi\|_\infty - \varphi \right) \le Y \|\gamma'(v)\|_\infty \varphi \big( \|\varphi\|_\infty - \varphi \big),
	\end{equation*}
	with $1/Y:= \gamma(\min_{\bar{\Omega}}\{v^{in}\})>0$. Therefore, $\varphi$ satisfies
	\begin{subequations}\label{d1}
		\begin{align}
			& \partial_t \varphi - \gamma(v) \Delta\varphi \le Y \|\gamma'(v)\|_\infty \varphi \big( \|\varphi\|_\infty - \varphi \big) \;\;\text{ in }\; (0,\infty)\times\Omega, \label{d1a} \\
			& \nabla \varphi\cdot \mathbf{n} = 0 \;\;\text{ on }\; (0,\infty)\times\partial\Omega, \label{d1b} \\
			& \varphi(0) = u^{in} \gamma\big( v^{in} \big) \;\;\text{ in }\; \Omega. \label{d1c}
		\end{align}
	\end{subequations}
	Let $\psi\in C^1([0,\infty))$ be the solution to the ordinary differential equation 
	\begin{subequations}\label{d2}
		\begin{align}
			\frac{d\psi}{dt} & = Y  \|\gamma'(v)\|_\infty \psi \big( \|\varphi\|_\infty - \psi \big), \qquad t>0, \label{d2a}\\
			\psi(0) & = \| u^{in} \gamma(v^{in}) \|_\infty. \label{d2b}
		\end{align}
	\end{subequations}
	Owing to~\eqref{d1}, \eqref{d2}, and the parabolic comparison principle,
	\begin{equation}
		\varphi(t,x) \le \psi(t), \qquad (t,x)\in [0,\infty)\times \bar{\Omega}. \label{d3}
	\end{equation}
	In particular, for $t>0$, $\|\varphi(t)\|_\infty \le \psi(t)$ and we deduce from~\eqref{d2a} that $d\psi/dt\le 0$ on $(0,\infty)$. Combining the just established time monotonicity of $\psi$ with~\eqref{d3} gives
	\begin{equation}
		\|\varphi(t)\|_\infty \le \psi(t) \le \psi(0) = \| u^{in} \gamma(v^{in}) \|_\infty, \qquad t\ge 0. \label{d4}
	\end{equation}
	Now, on the one hand, the monotonicity of $\gamma$ and Theorem \ref{thm4} imply that
	\begin{equation}
		\gamma(\min_{\bar{\Omega}}\{v^{in}\}) u(t,x) \le \varphi(t,x) \le \|\varphi(t)\|_\infty, \qquad (t,x)\in [0,\infty) \times \bar{\Omega}. \label{d5}
	\end{equation}
	On the other hand, the elliptic comparison principle and the definition $v^{in} = \mathcal{A}^{-1}[u^{in}]$ entail that $\|v^{in}\|_\infty \le \|u^{in}\|_\infty$ and we use once more the monotonicity of $\gamma$ to obtain
	\begin{equation}
		\| u^{in} \gamma(v^{in}) \|_\infty \le \|u^{in}\|_\infty \gamma\big(\max_{\bar{\Omega}}\{v^{in}\}\big) \,. \label{d6}
	\end{equation}
	Gathering~\eqref{d4}, \eqref{d5}, and~\eqref{d6} completes the proof. 
\end{proof}

\section{Large time behaviour for non-decreasing $\gamma$}\label{sec5}

\begin{proof}[Proof of of Theorem~\ref{thm5}]
	As already pointed out in the introduction, the monotonicity of $\gamma$ implies the existence of a Liapunov function. Indeed, it readily follows from~\eqref{ks} that
	\begin{align*}
		\frac{1}{2} \frac{d}{dt} \left( \|\nabla v\|_2^2 + \|v\|_2^2 \right) & = \int_\Omega v \partial_t (v-\Delta v)\ \mathrm{d}x = \int_\Omega v \partial_t u\ \mathrm{d}x \\
		& = \int_\Omega u \gamma(v) \Delta v\ \mathrm{d}x = \int_\Omega \gamma(v) (v-\Delta v) \Delta v\ \mathrm{d}x \\
		& = - \int_\Omega (v \gamma'(v) + \gamma(v)) |\nabla v|^2\ \mathrm{d}x - \int_\Omega \gamma(v) |\Delta v|^2\ \mathrm{d}x,
	\end{align*}
and the monotonicity and positivity of $\gamma$, along with Theorem~\ref{thm4}, guarantee that the right-hand side of the above identity is non-positive. We then proceed as in the proof of \cite[Theorem~1.3]{AhYo2019} to complete that of Theorem~\ref{thm5}.
\end{proof}
 
\section*{Acknowledgments}
Jiang is supported by National Natural Science Foundation of China (NSFC)
under grants No.~12271505 \& No.~12071084,  the Training Program of Interdisciplinary Cooperation of Innovation Academy for Precision Measurement Science and Technology, CAS (No.~S21S3202), and by Knowledge Innovation Program of Wuhan-Basic Research (No.~2022010801010135).
\bibliographystyle{siam}
\bibliography{JJPL2023}

\end{document}